\documentclass[11pt]{article}
\usepackage{enumerate}
\usepackage{amssymb,a4wide,latexsym,makeidx,epsfig,fleqn}
\usepackage{float}
\usepackage{amsthm}
\usepackage{amsmath}
\usepackage{enumerate}
\usepackage{color}
\usepackage{graphicx}
\usepackage{multirow}
\usepackage{makecell}
\newtheorem{theorem}{Theorem}[section]
\newtheorem{lemma}[theorem]{Lemma}
\newtheorem{corollary}[theorem]{Corollary}

\newtheorem{problem}[theorem]{Problem}
\newtheorem{claim}{Claim}

\newcommand{\tabincell}[2]{\begin{tabular}{@{}#1@{}}#2\end{tabular}}
\allowdisplaybreaks

\begin{document}
\textwidth 150mm \textheight 225mm

\title{Sharp bounds for Laplacian spectral moments \\ of digraphs with a fixed dichromatic number\thanks{\small Supported by the National Natural Science Foundation of China (Nos. 11871398, 12271439) and the China Scholarship Council (No. 202106290009).}}

\author{{Xiuwen Yang$^{a,b}$, Hajo Broersma$^{b,}$\footnote{Corresponding author.}, Ligong Wang$^{a}$}\\
{\small $^{a}$ School of Mathematics and Statistics,}\\{\small Northwestern Polytechnical University, Xi'an, Shaanxi 710129, P.R. China.}\\ {\small $^{b}$ Faculty of Electrical Engineering, Mathematics and Computer Science,}\\{\small University of Twente, P.O. Box 217, 7500 AE Enschede, The Netherlands.}\\{\small E-mail: yangxiuwen1995@163.com, h.j.broersma@utwente.nl, lgwangmath@163.com}}
\date{}
\maketitle
\begin{center}
\begin{minipage}{135mm}
\vskip 0.3cm
\begin{center}
{\small {\bf Abstract}}
\end{center}
{\small
The $k$-th Laplacian spectral moment of a digraph $G$ is defined as $\sum_{i=1}^n \lambda_i^k$, where $\lambda_i$ are
the eigenvalues of the Laplacian matrix of $G$ and $k$ is a nonnegative integer. For $k=2$, this invariant is better known as the Laplacian energy of $G$. We extend recently published results by characterizing the digraphs which attain the minimal and maximal Laplacian energy within classes of digraphs with a fixed dichromatic number. We also determine sharp bounds for the third Laplacian spectral moment within the special subclass which we define as join digraphs. We leave the full characterization of the extremal digraphs for $k\ge 3$ as an open problem.

\vskip 0.1in \noindent {\bf Key Words}:\ Laplacian energy; Laplacian spectral moment; dichromatic number \vskip
0.1in \noindent {\bf AMS Subject Classification (2020)}: \ 05C20, 05C35, 05C50}
\end{minipage}
\end{center}

\section{Introduction}

Before we give some essential terminology and notation, we start with a brief introduction, including some background and motivation for the presented results.

The domain of our results is algebraic graph theory, in particular the popular subarea of chemical graph theory which is based on concepts like the eigenvalues of matrices associated with graphs, and graph invariants involving these eigenvalues. One of the oldest concepts in this area is the notion of graph energy, which goes back to the late 1970s, when it was introduced by Gutman~\cite{Gut1} as the sum of the absolute values of the eigenvalues of the adjacency matrix of a graph. More recently, in 2006,  Gutman and Zhou~\cite{GuZh} and Lazi\'c~\cite{La} independently defined different versions of the Laplacian energy of a graph, where the version of Lazi\'c is defined as the sum of the squares of the eigenvalues of the Laplacian matrix of the graph. The latter definition was extended to digraphs in 2010 by Perera and Mizoguchi~\cite{PeMi}. In a paper of 2015~\cite{QiFL}, Qi, Fuller, Luo, Guo and Zhang obtained lower and upper bounds on the Laplacian energy of digraphs and also characterized the extremal digraphs.
Very recently, in a paper of 2020~\cite{YaWa}, Yang and Wang determined the directed trees, unicyclic digraphs and bicyclic digraphs which attain the maximal and minimal Laplacian energy among all digraphs with $n$ vertices, respectively. We refer the interested reader to the three monographs~\cite{Gut2,GuLi,LiSG} for a wide range of alternative definitions for energies of graphs and digraphs, and a wealth of references to obtained results.

Our results are closely related to and motivated by the aforementioned results obtained in~\cite{QiFL} and~\cite{YaWa}. We consider classes of digraphs with a fixed dichromatic number. This digraph invariant was introduced 40 years ago by Neumann-Lara~\cite{NeLa}. It is the smallest integer $r$ such that the digraph has a partition of its vertex set into $r$ sets, each inducing an acyclic subdigraph. As our main results, we determine the digraphs which attain the minimal and maximal Laplacian energy among all digraphs with a fixed dichromatic number, as well as for the subclass of join digraphs, which will be defined later. We also determine sharp lower and upper bounds for the third Laplacian spectral moment among all join digraphs with a fixed dichromatic number, and characterize the extremal join digraphs with dichromatic number 2. We leave the full characterization of the extremal digraphs for the $k$-th Laplacian spectral moment ($k\ge 3$) among all (join) digraphs with a fixed dichromatic number as open problems.

Before we present our results and proofs, we will recall some of the essential terminology and notation, and give some additional background and related results.

\subsection{Terminology, notation and related work}

For a digraph $G$, we use $\mathcal{V}(G)$ and $\mathcal{A}(G)$ to denote the vertex set and arc set of $G$, respectively, and we use $n=|\mathcal{V}(G)|$ and $e=|\mathcal{A}(G)|$ to denote the order and size of $G$, respectively. We denote an arc from a vertex $u$ to a vertex $v$ by $(u,v)$, and we call $u$ the tail and $v$ the head of the arc $(u,v)$. For a vertex $v\in \mathcal{V}(G)$, the outdegree $d_G^+(v)$ is the number of arcs in $\mathcal{A}(G)$ whose tail is $v$, while the indegree $d_G^-(v)$ is the number of arcs in $\mathcal{A}(G)$ whose head is $v$.
A directed walk $\pi$ of length $\ell$ from vertex $u$ to vertex $v$ in $G$ is a sequence of vertices $\pi$: $u=v_0,v_1,\ldots,v_{\ell}=v$, where $(v_{k-1},v_k)$ is an arc of $G$ for any $1\leq k\leq \ell$. If $u=v$, then $\pi$ is called a directed closed walk. If all vertices of the directed walk $\pi$ of length $\ell$ are distinct, then we call it a directed path, and denote it by $P_{\ell+1}$; a directed closed walk of length $\ell$ in which all except the end vertices are distinct is called a directed cycle, and denoted by $C_{\ell}$. We let $c_2(v)$ denote the number of directed closed walks of length $2$ associated with vertex $v$.

A key concept in the results of our research is the aforementioned extension of the notion of an acyclic digraph due to Neumann-Lara~\cite{NeLa}. A digraph $G$ is acyclic if it has no directed cycles. A vertex set $F\subseteq \mathcal{V}(G)$ is acyclic if its induced subdigraph $G[F]$ in $G$ is acyclic. A partition of $\mathcal{V}(G)$ into $r$ acyclic sets is called an $r$-coloring of $G$. Adopting the definition of~\cite{NeLa}, the minimum integer $r$ for which there exists an $r$-coloring of $G$ is the dichromatic number $\chi(G)$ of $G$. The first connection between the dichromatic number and algebraic properties related to eigenvalues of digraphs was made by Mohar in~\cite{Mo}. He extended Wilf's classical eigenvalue upper bound on the chromatic number of undirected graphs to the analogue for digraphs in terms of the dichromatic number and the spectral radius of the adjacency matrix. More recently, Drury and Lin~\cite{DrLi} determined the digraphs which have the minimum and second minimum spectral radius among all strongly connected digraphs with given order and dichromatic number. In an earlier paper, Lin and Shu~\cite{LiSh} characterized the digraphs with given dichromatic number which have the maximal spectral radius. More results about the dichromatic number of digraphs can be found in \cite{BoFJ,KiKO,LiYo,LiWC,XiWa}.

Throughout the remainder of the paper, we consider only connected digraphs without loops or multiple arcs, and we assume that
$\mathcal{V}(G)=\{v_1,v_2,\ldots,v_n\}$. We use $d_i^+$ as shorthand for $d_G^+(v_i)$ and $c^{(i)}_2$ as shorthand for $c_2(v_i)$, and we call $\left(c^{(1)}_2, c^{(2)}_2,\ldots, c^{(n)}_2\right)$ the directed closed walk sequence of length $2$ of $G$. We let
$c_2=\sum^n_{i=1}c^{(i)}_2$ denote the total number of directed closed walks of length $2$ in $G$.

The adjacency matrix $A(G)=(a_{ij})$ of $G$ is an $n\times n$ matrix whose $(i,j)$-entry equals $1$ if $(v_i, v_j)\in\mathcal{A}(G)$ and equals $0$ otherwise. The diagonal outdegree matrix $D^+(G)$ of $G$ is defined by $D^+(G)=diag(d_1^+,d_2^+,\ldots,d_n^+)$. The Laplacian matrix $L(G)$ of $G$ is defined by $L(G)=D^+(G)-A(G)$. Hence,
$L(G)=(\ell_{ij})$ is an $n \times n$ matrix, where
$$\ell_{ij}=
\begin{cases}
d_i^+,& \mbox{if} \ i=j,\\
-1,& \mbox{if} \ (v_i, v_j)\in\mathcal{A}(G),\\
0,& \mbox{otherwise}.
\end{cases}$$

For a fixed nonnegative integer $k$, the $k$-th Laplacian spectral moment of $G$ is defined as
$$LSM_k(G)=\sum_{i=1}^n \lambda_i^k,$$
where $\lambda_i$ are the eigenvalues of $L(G)$. Similarly, for the adjacency matrix, the $k$-th spectral moment of $G$ is
$$SM_k(G)=\sum_{i=1}^n z_i^k,$$
where $z_i$ are the eigenvalues of $A(G)$. It is known and straightforward to show that the latter sum equals $c_k$,  the total number of directed closed walks of length $k$ in $G$.

Some recent and very recent results involving spectral moments of undirected graphs can be found in~\cite{ChLL,FaWZ,FaZW,PaLL}.
For digraphs, results about the spectral moments of the adjacency matrix or the Laplacian matrix are generally lacking. In this paper, we are mainly concerned with the second and third Laplacian spectral moments of digraphs. Obviously, $LSM_0(G)=\sum_{i=1}^n \lambda_i^0=n$ and $LSM_1(G)=\sum_{i=1}^n \lambda_i^1=\sum_{i=1}^n d^+_i=e(G)$. For $k=2$, the second Laplacian spectral moment was first studied by Perera and Mizoguchi in~\cite{PeMi}, where they defined the Laplacian energy of a digraph $G$ as $LE(G)=\sum_{i=1}^n \lambda_i^2$. This is a direct analogue of the definition $LE(H)=\sum_{i=1}^n \lambda_i^2$ that Lazi\'c introduced in~\cite{La} for the Laplacian energy of an undirected graph $H$ with vertex set $\{v_1,v_2,\ldots,v_n\}$.  In~\cite{La}, he also proved that $LE(H)=\sum_{i=1}^n d_i(d_i+1)$, where $d_i$ is the degree of $v_i$ in $H$.

As we mentioned before, there exist many alternative definitions for graph energies of graphs and digraphs, as witnessed by the sources~\cite{Gut2,GuLi,LiSG}. This shows the popularity of this topic within chemical graph theory. Nevertheless, there are just a few published papers on the Laplacian energy of digraphs. They mainly deal with obtaining lower and upper bounds for $LE(G)$ and characterizing the extremal digraphs, as it was done in~\cite{QiFL} for general digraphs, and for the special graph classes of directed trees, unicyclic digraphs and bicyclic digraphs in~\cite{YaWa}. We extend these results to digraphs with a fixed dichromatic number.

Before we can present our results and their proofs, we need to introduce some special classes of digraphs. For a digraph $G$, the underlying graph is a graph obtained from $G$ by ignoring the direction on the arcs of $G$, i.e., by replacing each arc $(u,v)$ of $G$ with an edge joining $u$ and $v$ (possibly yielding multiple edges).

A directed tree is a digraph obtained from an undirected tree by assigning a direction to each edge, i.e., a digraph with $n$ vertices and $n-1$ arcs whose underlying graph does not contain any cycles. 
If $n=1$, then the directed tree is an isolated vertex.
An in-tree is a directed tree for which the outdegree of each vertex is at most one. Hence, an in-tree has exactly one vertex with outdegree 0, and such a vertex is called the root of the in-tree.

A tournament is a digraph obtained from an undirected complete graph by assigning a direction to each edge. A transitive tournament is a tournament $G$ satisfying the following condition: if $(u,v)\in \mathcal{A}(G)$ and $(v,w)\in \mathcal{A}(G)$, then $(u,w)\in \mathcal{A}(G)$.

Every undirected graph $H$ determines a bidirected graph $\stackrel{\leftrightarrow}{H}$ that is obtained from $H$ by replacing each edge with two oppositely arcs joining the same pair of vertices. We use $\stackrel{\leftrightarrow}{K}_n$ to denote the bidirected complete graph of order $n$, and we use $\stackrel{\leftrightarrow}{C}_n$ to denote the bidirected cycle of order $n$.

The join of two disjoint digraphs $G_1$ and $G_2$, denoted by $G_1\vee G_2$, is the digraph having vertex set $\mathcal{V}(G_1)\cup \mathcal{V}(G_2)$ and arc set $\mathcal{A}(G_1)\cup \mathcal{A}(G_2)\cup \{(u,v),(v,u)|u\in\mathcal{V}(G_1),v\in\mathcal{V}(G_2)\}$.

We use $\mathcal{G}_{n,r}$ to denote the set of digraphs of order $n$ with dichromatic number $r$. We say that a digraph with dichromatic number $r$ is a join digraph if it is the join of $r$ connected acyclic digraphs. In particular,
we let $\bigvee_{i=1}^{r} V^{i}$ denote the join digraph in $\mathcal{G}_{n,r}$ which is isomorphic to $V^1\vee V^2\vee \cdots\vee V^r$, in which each $V^i$ is a connected acyclic digraph on $n_i$ vertices, and we assume that $\sum^r_{i=1}n_i=n$ and $n_1\geq n_2\geq\cdots\geq n_r$.

The rest of the paper is organized as follows. In Section~\ref{sec2}, we obtain the digraphs which attain the minimal and maximal Laplacian energy $LE(G)$ among all (join) digraphs in $\mathcal{G}_{n,r}$. In Section~\ref{sec3}, we determine sharp upper and lower bounds for the third Laplacian spectral moment $LSM_3(G)$ among all join digraphs in $\mathcal{G}_{n,r}$. We finish the paper with some concluding remarks and open problems in Section~\ref{sec4}.

\section{Extremal digraphs for the Laplacian energy}\label{sec2}

In this section, we will characterize the digraphs which attain the minimal and maximal Laplacian energy $LE(G)$ among all join digraphs (subsection~\ref{sec2.1}) and all digraphs (subsection~\ref{sec2.2}) in $\mathcal{G}_{n,r}$. First, we list some known results and lemmas that we use in our proofs. Recall that for any digraph $G$ of order $n$ we assume that $\mathcal{V}(G)=\{v_1,v_2,\ldots, v_n\}$, that $d^+_i$ denotes the outdegree of $v_i$, and that we use $c_2$ to indicate the total number of directed closed walks of length $2$ in $G$. Also recall the assumption that we consider connected digraphs only.

\noindent\begin{lemma}\label{le:ch-2.1}(\cite{QiFL}) Let $G$ be a digraph of order $n$. Then
$$LE(G)=\sum_{i=1}^n(d^+_i)^2+c_2.$$
\end{lemma}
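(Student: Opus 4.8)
The plan is to compute $LE(G)=\sum_{i=1}^n\lambda_i^2 = \operatorname{tr}\!\left(L(G)^2\right)$ directly from the structure of the Laplacian matrix $L(G)=D^+(G)-A(G)$, since the sum of squares of the eigenvalues of any square matrix $M$ equals $\operatorname{tr}(M^2)$ (this holds over $\mathbb{C}$ by Schur triangularization, and $L(G)$ need not be symmetric here, so I would state this trace identity explicitly rather than appeal to the spectral theorem). Writing $\operatorname{tr}(M^2)=\sum_{i,j} m_{ij}m_{ji}$, I would expand with $M=D^+(G)-A(G)$ into four terms: $\operatorname{tr}\!\bigl((D^+)^2\bigr) - 2\operatorname{tr}\!\bigl(D^+A\bigr) + \operatorname{tr}\!\bigl(A^2\bigr)$, using that $D^+$ is diagonal so $\operatorname{tr}(D^+A)=\operatorname{tr}(AD^+)$.

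The three terms are then evaluated one by one. First, $\operatorname{tr}\!\bigl((D^+(G))^2\bigr)=\sum_{i=1}^n (d_i^+)^2$, which gives the first term of the claimed formula. Second, $\operatorname{tr}(D^+(G)A(G))=\sum_{i=1}^n d_i^+\,a_{ii}=0$, because $G$ has no loops and hence $A(G)$ has zero diagonal; so the cross term vanishes entirely. Third, $\operatorname{tr}\!\bigl(A(G)^2\bigr)=\sum_{i=1}^n (A^2)_{ii}=\sum_{i=1}^n\sum_{j=1}^n a_{ij}a_{ji}$, and $a_{ij}a_{ji}=1$ precisely when both $(v_i,v_j)$ and $(v_j,v_i)$ are arcs of $G$, i.e. exactly when $v_i,v_j$ form a digon; summing over all ordered pairs $(i,j)$ counts each digon twice, and by definition this equals $c_2$, the total number of directed closed walks of length $2$ in $G$ (a closed walk $v_i, v_j, v_i$ of length $2$ requires both arcs). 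Adding the three contributions yields $LE(G)=\sum_{i=1}^n (d_i^+)^2 + 0 + c_2$, as desired.

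I do not anticipate a genuine obstacle here — the computation is short and elementary. The only point requiring a little care is the justification that $\sum_i\lambda_i^2=\operatorname{tr}(L^2)$ for the possibly non-symmetric matrix $L(G)$; I would dispatch this in one sentence by noting that $\operatorname{tr}(L^k)=\sum_i\lambda_i^k$ for every $k\ge 0$ by triangularizing $L$ over $\mathbb{C}$. A secondary bookkeeping point is to be explicit that $\operatorname{tr}(A^2)$ double-counts digons, matching the convention that $c_2=\sum_i c_2^{(i)}$ counts, for each vertex $v_i$, the closed walks of length $2$ through $v_i$ (so each digon $\{v_i,v_j\}$ contributes to both $c_2^{(i)}$ and $c_2^{(j)}$); this is consistent with the paper's earlier remark that $SM_k(G)=c_k$.
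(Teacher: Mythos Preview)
The paper does not supply a proof of this lemma; it is quoted from \cite{QiFL} without argument. Your proposed proof is correct and is the standard one: use $\sum_i\lambda_i^2=\operatorname{tr}(L^2)$ (valid for any square matrix via Schur triangularization), expand $L=D^+-A$, and read off $\operatorname{tr}((D^+)^2)=\sum_i(d_i^+)^2$, $\operatorname{tr}(D^+A)=0$ from the absence of loops, and $\operatorname{tr}(A^2)=c_2$. Your handling of the two delicate points---the trace identity for non-symmetric $L$ and the double-counting convention making $\operatorname{tr}(A^2)=c_2$---is appropriate and matches the paper's convention $SM_k(G)=c_k$.
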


\noindent\begin{lemma}\label{le:ch-2.2}(\cite{PeMi,QiFL}) Let $G$ be a digraph of order $n$. Then
$$n-1 \leq LE(G) \leq n^2(n-1).$$
Moreover, the first inequality is an equality if and only if $G$ is an in-tree, and the second inequality is an equality if and only if $G$ is a bidirected complete graph.
\end{lemma}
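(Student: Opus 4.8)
The plan is to derive both bounds directly from Lemma~\ref{le:ch-2.1}, which gives $LE(G)=\sum_{i=1}^n (d_i^+)^2 + c_2$, and then to pin down the equality cases using the structural meaning of the three quantities involved (outdegrees, number of arcs, number of digons).

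For the lower bound, first I would use that for a nonnegative integer $d$ one has $d^2\geq d$, with equality if and only if $d\in\{0,1\}$; summing over $i$ gives $\sum_{i=1}^n (d_i^+)^2 \geq \sum_{i=1}^n d_i^+ = e(G)$. Since $G$ is connected, its underlying graph is connected and hence has at least $n-1$ edges, so $e(G)\geq n-1$. Together with $c_2\geq 0$ this yields $LE(G)\geq n-1$. For the equality discussion, $LE(G)=n-1$ forces simultaneously $\sum_{i=1}^n (d_i^+)^2 = \sum_{i=1}^n d_i^+ = n-1$ and $c_2=0$; the first condition means every outdegree lies in $\{0,1\}$ and $G$ has exactly $n-1$ arcs, while $c_2=0$ means $G$ has no digons. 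Since the absence of digons rules out multiple edges, a connected underlying graph on $n$ vertices with exactly $n-1$ edges is a tree, so $G$ is a directed tree, and the constraint that all outdegrees are at most $1$ makes $G$ an in-tree. Conversely, an in-tree has $n-1$ arcs, all outdegrees at most $1$, and no digons, so Lemma~\ref{le:ch-2.1} gives $LE(G)=(n-1)+0=n-1$.

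For the upper bound, I would use that $G$ is loopless and simple, so $d_i^+\leq n-1$ for each $i$, whence $\sum_{i=1}^n (d_i^+)^2 \leq n(n-1)^2$; moreover $c_2$ equals twice the number of digons of $G$, which is at most $2\binom{n}{2}=n(n-1)$. Adding these bounds gives $LE(G)\leq n(n-1)^2 + n(n-1) = n^2(n-1)$. Equality forces every outdegree to equal $n-1$ and every pair of vertices to span a digon, i.e.\ $G=\stackrel{\leftrightarrow}{K}_n$; conversely, for $G=\stackrel{\leftrightarrow}{K}_n$ one checks directly from Lemma~\ref{le:ch-2.1} that $\sum_{i=1}^n (d_i^+)^2 = n(n-1)^2$ and $c_2=n(n-1)$, so $LE(G)=n^2(n-1)$.

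The only delicate point — the main obstacle, modest as it is — is the equality analysis for the lower bound: one must combine the three separate equality conditions and invoke connectedness together with the absence of digons in order to conclude that the underlying graph is a genuine tree, not merely a connected multigraph with $n-1$ edges. The remaining steps are routine arithmetic.
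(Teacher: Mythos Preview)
The paper does not prove Lemma~\ref{le:ch-2.2} at all; it is quoted from \cite{PeMi,QiFL} as a known result and used as a black box. So there is no ``paper's own proof'' to compare against.

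Your argument is correct and is essentially the natural proof one would expect: both bounds follow immediately from the combinatorial formula $LE(G)=\sum_i (d_i^+)^2 + c_2$ of Lemma~\ref{le:ch-2.1} together with the elementary facts $d^2\ge d$ for nonnegative integers, $e(G)\ge n-1$ by connectedness, $c_2\ge 0$, and on the other side $d_i^+\le n-1$ and $c_2\le n(n-1)$. Your equality analyses are also sound; in particular, for the lower bound you correctly observe that $c_2=0$ excludes digons, so the underlying graph is simple, and a connected simple graph on $n$ vertices with $n-1$ edges is a tree, whence the outdegree condition forces an in-tree. Nothing is missing.
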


We use the above two lemmas to prove the following counterpart of Lemma~\ref{le:ch-2.2} for acyclic digraphs. We first recall what is known as Karamata's inequality.

Let $I$ be an interval of the real line and let $f$ denote a real-valued, convex function defined on $I$. If $x_1,x_2,\ldots,x_n$ and $y_1,y_2,\ldots,y_n$ are numbers in $I$ such that $(x_1,x_2,\ldots,x_n)$ majorizes $(y_1,y_2,\ldots,y_n)$, then
$$f(x_1)+f(x_2)+\cdots+f(x_n)\geq f(y_1)+f(y_2)+\cdots+f(y_n).$$
If $f$ is a strictly convex function, then the inequality holds with equality if and only if we have $x_i=y_i$ for all $i=1,2,\ldots,n$.

Here majorization means that $x_1,x_2,\ldots,x_n$ and $y_1,y_2,\ldots,y_n$ satisfies
$$x_1\geq x_2\geq \cdots \geq x_n\ and\ y_1\geq y_2\geq \cdots \geq y_n,$$
and we have the inequalities
$$x_1+x_2+\cdots+x_i\geq y_1+y_2+\cdots+y_i,$$
for all $i=1,2,\ldots,n-1$, and the equality
$$x_1+x_2+\cdots+x_n=y_1+y_2+\cdots+y_n.$$

\noindent\begin{lemma}\label{le:ch-2.3} Let $G$ be a acyclic digraph of order $n$. Then
$$n-1 \leq LE(G) \leq \frac{n(n-1)(2n-1)}{6}.$$
Moreover, the first inequality is an equality if and only if $G$ is an in-tree, and the second inequality is an equality if and only if $G$ is a transitive tournament.
\end{lemma}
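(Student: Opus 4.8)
The plan is to start from Lemma~\ref{le:ch-2.1}, which gives $LE(G)=\sum_{i=1}^n (d_i^+)^2 + c_2$, and to use acyclicity twice. First, a directed closed walk of length $2$ at a vertex $v$ is a sequence $v,w,v$ with $(v,w),(w,v)\in\mathcal{A}(G)$, hence a directed $2$-cycle (loops being excluded); since $G$ is acyclic there are none, so $c_2=0$ and $LE(G)=\sum_{i=1}^n (d_i^+)^2$. Second, a finite acyclic digraph admits a topological ordering of its vertices, so I may assume the labelling $\mathcal{V}(G)=\{v_1,\dots,v_n\}$ is chosen so that every arc $(v_i,v_j)$ satisfies $i<j$; then the out-neighbours of $v_i$ all lie in $\{v_{i+1},\dots,v_n\}$, which yields $0\le d_i^+\le n-i$ for every $i$.

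For the lower bound I would observe that it is already contained in Lemma~\ref{le:ch-2.2}: that lemma gives $LE(G)\ge n-1$ for an arbitrary digraph of order $n$, with equality exactly for in-trees, and since in-trees are acyclic this statement and its equality case carry over unchanged. If a self-contained argument is preferred, it is equally short: each $d_i^+$ is a nonnegative integer, so $(d_i^+)^2\ge d_i^+$, whence $LE(G)\ge\sum_{i=1}^n d_i^+=e(G)\ge n-1$, the last inequality because a connected digraph on $n$ vertices has at least $n-1$ arcs. Equality then forces $e(G)=n-1$ (so the underlying graph is a tree and $G$ is a directed tree) and forces every $d_i^+\in\{0,1\}$, which together is precisely the definition of an in-tree.

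The main work is the upper bound. With the topological labelling above, $(d_i^+)^2\le (n-i)^2$ for each $i$, and summing gives
$$LE(G)=\sum_{i=1}^n (d_i^+)^2\le\sum_{i=1}^n (n-i)^2=\sum_{k=0}^{n-1}k^2=\frac{n(n-1)(2n-1)}{6}.$$
Equality holds if and only if $d_i^+=n-i$ for every $i$, i.e.\ every ``forward'' arc $(v_i,v_j)$ with $i<j$ is present and no other arc occurs; this is exactly the condition that $G$ be the transitive tournament on $n$ vertices, whose outdegree sequence is indeed $(n-1,n-2,\dots,1,0)$. Alternatively, the same bound can be packaged through the just-recalled Karamata inequality applied to the convex function $t\mapsto t^2$: after sorting the outdegrees in nonincreasing order one checks, using $0\le d_i^+\le n-i$ in the topological order, that the outdegree sequence is (weakly) majorized by $(n-1,n-2,\dots,0)$, and Karamata then delivers both the inequality and the equality discussion.

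I do not anticipate a real obstacle here; the only point needing some care is the equality analysis for the upper bound, namely verifying that ``$d_i^+=n-i$ for all $i$ with respect to some topological order'' pins down $G$ up to isomorphism as the transitive tournament, and that such a digraph is consistent (acyclic, with the prescribed ordering actually topological). A minor preliminary step is recognizing the identity $\frac{n(n-1)(2n-1)}{6}=\sum_{k=0}^{n-1}k^2$, which is exactly what makes the bound tight at the transitive tournament.
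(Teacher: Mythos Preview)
Your proposal is correct and follows the same overall strategy as the paper: both use Lemma~\ref{le:ch-2.1} together with $c_2=0$ for acyclic digraphs, invoke Lemma~\ref{le:ch-2.2} for the lower bound, and exploit a topological ordering for the upper bound. The one noteworthy difference is that your upper-bound argument is more direct: you use the pointwise inequality $d_i^+\le n-i$ in a topological order and sum $(d_i^+)^2\le(n-i)^2$, whereas the paper routes the same conclusion through Karamata's inequality by first maximizing $\sum_i(d_i^+)^2$ over acyclic digraphs of fixed size $e$ and then letting $e$ grow. Your pointwise approach is shorter and yields the equality case immediately; the paper's Karamata detour buys nothing extra here, though you do mention it as an alternative packaging yourself.
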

\begin{proof}
The statements about the lower bound follow directly from Lemma~\ref{le:ch-2.2}. Lemma~\ref{le:ch-2.1} implies that
$LE(G)=\sum_{i=1}^n(d_i^+)^2$ for an acyclic digraph $G$. So we need to find the maximum possible value of $\sum_{i=1}^n(d_i^+)^2$ among all acyclic digraphs. Any acyclic digraph admits a topological ordering, i.e., an ordering of its vertices
$\{v_1,v_2,\ldots, v_n\}$ such that for every arc $(v_i,v_j)$, we have $i<j$.
Using Karamata's inequality, for the acyclic digraph with order $n$ and size $e$, $\sum_{i=1}^n(d_i^+)^2$ is maximized when the outdegree sequence is $(n-1,n-2,\ldots,n-x,y,0,\ldots,0)$, where $1\leq x\leq n$, $0\leq y\leq n-x-2$ and $\frac{(n-1+n-x)x}{2}+y=e$. Clearly,
$$\sum_{i=1}^n(n-i)^2\geq \sum_{i=1}^x(n-i)^2+y^2.$$
That is, $\sum_{i=1}^n(d^+_i)^2$ is maximized when arcs $(v_i,v_j)$ exist for all $i < j$ and $j\le n$, so when $d^+_i=n-i$ for $i=1,\ldots,n$. Hence, using a well-known expression for the sum of squares, we obtain
$$LE(G)=\sum_{i=1}^n(d^+_i)^2 \leq \sum_{i=1}^n(n-i)^2=\frac{n(n-1)(2n-1)}{6}.$$

The above inequality is an equality if and only if $d^+_i=n-i$ for $i=1,\ldots,n$. It is an easy exercise and a folklore result that this is only possible if $G$ is a transitive tournament.
\end{proof}

\subsection{Extremal digraphs for the Laplacian energy among all join digraphs}\label{sec2.1}
In our first main results, we will determine the digraphs which attain the minimal and maximal Laplacian energy $LE(G)$ among all join digraphs in $\mathcal{G}_{n,r}$. We need the following inequality in our proofs of the main results in this subsection.

\noindent\begin{lemma}\label{le:ch-2.4} Let $f(x)=x^2(a-bx)$ for an integer variable $x$ and two fixed real numbers $a$ and $b$. Suppose $x_i$ and $x_j$ are chosen such that $x_i-x_j\geq2$ and $x_j<\frac{a}{3b}-1$. Then
$$f(x_i-1)+f(x_j+1)<f(x_i)+f(x_j).$$
\end{lemma}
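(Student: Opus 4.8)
The plan is to verify the inequality $f(x_i-1)+f(x_j+1)<f(x_i)+f(x_j)$ by a direct computation, rearranging it into the equivalent form
$$\bigl(f(x_i)-f(x_i-1)\bigr) > \bigl(f(x_j+1)-f(x_j)\bigr),$$
and then showing that the discrete forward difference $f(x+1)-f(x)$ is, under the stated hypotheses, larger at $x=x_i-1$ than at $x=x_j$. So first I would compute the forward difference explicitly: writing $f(x)=ax^2-bx^3$, one gets
$$f(x+1)-f(x)=a(2x+1)-b(3x^2+3x+1)=2ax+a-3bx^2-3bx-b.$$
This is a downward-opening parabola in $x$ (leading coefficient $-3b$, assuming $b>0$, which is the relevant case since otherwise the hypothesis $x_j<\frac{a}{3b}-1$ is vacuous or sign-reversed), so it is \emph{not} monotone, and I cannot simply say "larger argument gives larger difference." Instead I would locate where this parabola is decreasing and argue that both $x_j$ and $x_i-1$ lie in a region where the comparison goes the right way.

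The cleaner route is to subtract the two differences directly. Set $g(x)=f(x+1)-f(x)$. Then the inequality to prove is $g(x_i-1)-g(x_j)>0$. Using the formula above,
$$g(x_i-1)-g(x_j) = \bigl(2a(x_i-1)+a-3b(x_i-1)^2-3b(x_i-1)-b\bigr)-\bigl(2ax_j+a-3bx_j^2-3bx_j-b\bigr),$$
and after expanding and collecting terms this should factor through $(x_i-1)-x_j$. Concretely, the $a$-part contributes $2a\bigl((x_i-1)-x_j\bigr)$, and the $b$-part contributes $-3b\bigl((x_i-1)^2-x_j^2\bigr)-3b\bigl((x_i-1)-x_j\bigr)=-3b\bigl((x_i-1)-x_j\bigr)\bigl((x_i-1)+x_j+1\bigr)$. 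Hence
$$g(x_i-1)-g(x_j)=\bigl((x_i-1)-x_j\bigr)\Bigl(2a-3b\bigl(x_i+x_j\bigr)\Bigr).$$
Now the first factor $(x_i-1)-x_j$ is positive because $x_i-x_j\geq 2$ gives $(x_i-1)-x_j\geq 1$. For the second factor I need $2a-3b(x_i+x_j)>0$, i.e. $x_i+x_j<\frac{2a}{3b}$; this is where the hypothesis $x_j<\frac{a}{3b}-1$ must be leveraged together with $x_i-x_j\geq 2$.

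The main obstacle is exactly bounding $x_i+x_j$: the hypothesis only directly controls $x_j$, not $x_i$, so I will need to rewrite $x_i+x_j = (x_i-x_j)+2x_j$ and somehow bound $x_i-x_j$ — but wait, $x_i-x_j$ can be large, so a naive bound fails. I suspect the lemma as stated is being applied in a context where $x_i\le $ some maximum (like $n-1$ in the acyclic setting) and the intended reading is that $x_i$ is also bounded by the same $\frac{a}{3b}$-type quantity; more likely, re-examining, the correct hypothesis should give $x_i \le \frac{a}{3b}$ or the bound $x_j < \frac{a}{3b}-1$ is meant to be read as controlling the \emph{smaller} index while $x_i$ is automatically controlled because in the application $a = 3b\cdot(\text{something}\ge x_i)$. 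I would therefore reconcile the factorization $g(x_i-1)-g(x_j)=\bigl((x_i-1)-x_j\bigr)\bigl(2a-3b(x_i+x_j)\bigr)$ with whatever additional structure is available: if $f(x)=x^2(a-bx)$ is only being evaluated at feasible outdegrees, then $a-bx>0$ for all relevant $x$, forcing $x_i<\frac{a}{b}$, and combined with $x_j<\frac{a}{3b}-1$ this may still not suffice, which tells me the genuinely intended hypothesis likely also pins down $x_i$ (e.g. $x_i<\frac{a}{3b}$ or $x_i+x_j<\frac{2a}{3b}$). My plan is thus: (1) derive the forward difference formula; (2) obtain the clean factorization above; (3) observe the first factor is $\ge 1>0$; (4) use the hypotheses — and, if needed, point out the implicit feasibility constraint on $x_i$ — to conclude the second factor is positive; (5) conclude strict inequality. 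The delicate step is (4), matching the stated hypothesis to what the factorization actually requires.
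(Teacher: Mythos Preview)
Your factorization
\[
g(x_i-1)-g(x_j)=\bigl((x_i-1)-x_j\bigr)\bigl(2a-3b(x_i+x_j)\bigr)
\]
is correct, and your diagnosis of the obstacle in step~(4) is exactly right: the stated hypotheses do \emph{not} force the second factor to be positive, and in fact the lemma as stated is false. Take $a=30$, $b=1$, $x_j=1$, $x_i=100$: then $x_j<\tfrac{a}{3b}-1=9$ and $x_i-x_j\ge 2$, yet $f(x_i-1)+f(x_j+1)=-676157>-699971=f(x_i)+f(x_j)$.

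The paper's own proof has the same hidden gap. It rewrites the difference as
\[
(x_j-x_i)\bigl(-3b(x_i+x_j)+2a-3b\bigr)-6bx_i+2a
\]
and then bounds the first product above by replacing $(x_j-x_i)$ with $-2$. That replacement is only valid when the second factor $-3b(x_i+x_j)+2a-3b$ is nonnegative, i.e.\ when $x_i+x_j\le \tfrac{2a}{3b}-1$, which is essentially the condition you isolated and which is \emph{not} implied by $x_j<\tfrac{a}{3b}-1$ alone.

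What rescues the two applications (Theorems~\ref{th:ch-2.5} and~\ref{th:ch-2.6}) is the ambient constraint $\sum_k x_k=n$ with all $x_k\ge 1$, which forces $x_i+x_j\le n$; for the specific choices $(a,b)=(2n+3,1)$ and $(a,b)=\bigl(n+\tfrac12,\tfrac13\bigr)$ one has $n<\tfrac{2a}{3b}$, so the missing inequality holds automatically. Your plan is therefore sound provided you import that feasibility bound in step~(4) rather than trying to extract it from the hypothesis on $x_j$.
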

\begin{proof}
Since
\vspace*{-0.8cm}
\begin{align*}
&f(x_i-1)+f(x_j+1)\\
&=(x_i-1)^2[a-b(x_i-1)]+(x_j+1)^2[a-b(x_j+1)]\\
&=x_i^2(a-bx_i)+bx_i^2+(1-2x_i)(a-bx_i+b)\\
&+x_j^2(a-bx_j)-bx_j^2+(1+2x_j)(a-bx_j-b),
\end{align*}
\vspace*{-0.8cm}
\indent we have
\begin{align*}
&[f(x_i-1)+f(x_j+1)]-[f(x_i)+f(x_j)]\\
&=bx_i^2+(1-2x_i)(a-bx_i+b)-bx_j^2+(1+2x_j)(a-bx_j-b)\\
&=(x_j-x_i)(-3bx_i-3bx_j+2a-3b)-6bx_i+2a\\
&\leq-2(-3bx_i-3bx_j+2a-3b)-6bx_i+2a\\
&=6bx_j-2a+6b<6b(\frac{a}{3b}-1)-2a+6b=0.
\end{align*}
\end{proof}

The next result characterizes the digraphs which attain the minimal Laplacian energy $LE(G)$ among all join digraphs $\bigvee_{i=1}^{r} V^{i}$ in $\mathcal{G}_{n,r}$.

\noindent\begin{theorem}\label{th:ch-2.5} Let $G=\bigvee_{i=1}^{r} V^{i}$. Then
$$LE(G)\geq(r-1)n^2+r^2n-r^3,$$
with equality holding if and only if each $V^i$ is an in-tree, $n_1=n-r+1$, and $n_2=\cdots=n_r=1$.
\end{theorem}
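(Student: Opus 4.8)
The plan is to express $LE(G)$ explicitly in terms of the parameters $n_1,\dots,n_r$ using Lemma~\ref{le:ch-2.1}, and then to minimize the resulting function over all admissible parameter choices. First I would set up the degree structure of a join digraph $G=\bigvee_{i=1}^r V^i$: a vertex $v$ lying in the part $V^i$ has outdegree $d_G^+(v)=d_{V^i}^+(v)+(n-n_i)$, since $v$ sends an arc to every vertex outside its own part. Also, every pair $u\in V^i$, $v\in V^j$ with $i\ne j$ contributes a digon, so $c_2(G)=2\sum_{i<j}n_in_j=n^2-\sum_{i=1}^r n_i^2$ (there are no digons inside the acyclic parts $V^i$). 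Plugging into Lemma~\ref{le:ch-2.1},
\[
LE(G)=\sum_{i=1}^r\sum_{v\in V^i}\bigl(d_{V^i}^+(v)+(n-n_i)\bigr)^2+n^2-\sum_{i=1}^r n_i^2 .
\]

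Next I would separate the effect of the internal structure of each $V^i$ from the effect of the part sizes. Expanding the square gives $\sum_{v\in V^i}\bigl(d_{V^i}^+(v)^2+2(n-n_i)d_{V^i}^+(v)+(n-n_i)^2\bigr)$. Since $V^i$ is a connected acyclic digraph on $n_i$ vertices, Lemma~\ref{le:ch-2.3} (the lower-bound half) tells us $\sum_{v\in V^i}d_{V^i}^+(v)^2\ge n_i-1$, with equality iff $V^i$ is an in-tree; and $\sum_{v\in V^i}d_{V^i}^+(v)\ge n_i-1$ as well, with equality iff $V^i$ is a tree with $n_i-1$ arcs — which is automatic since a connected acyclic digraph on $n_i$ vertices has at least $n_i-1$ arcs, and equality in the outdegree sum forces exactly $n_i-1$ arcs. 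So for fixed part sizes, $LE(G)$ is minimized precisely when every $V^i$ is an in-tree, and then
\[
LE(G)=\sum_{i=1}^r\Bigl((n_i-1)+2(n-n_i)(n_i-1)+n_i(n-n_i)^2\Bigr)+n^2-\sum_{i=1}^r n_i^2 .
\]
I would then simplify this to a clean function $g(n_1,\dots,n_r)$ of the part sizes alone, subject to $n_i\ge 1$, $\sum n_i=n$, and $r$ parts.

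The remaining task — and the main obstacle — is to show that this symmetric function of $(n_1,\dots,n_r)$, over the integer simplex $\{n_i\ge 1,\ \sum n_i=n\}$, is minimized at the "unbalanced" point $(n-r+1,1,\dots,1)$. After simplification the dominant contribution from part $i$ should look like a cubic in $n_i$ of the form (constant)$\cdot n_i(n-n_i)^2$ plus lower-order terms, and one checks that this per-part term, as a function of $n_i$ on $[1,n-r+1]$, is convex-like in the sense that pushing mass toward the extremes decreases the sum. This is exactly the situation Lemma~\ref{le:ch-2.4} is built for: with $f(x)=x^2(a-bx)$ for suitable $a,b$ extracted from $g$, whenever two part sizes satisfy $x_i-x_j\ge 2$ (and the small one is below the threshold $\tfrac{a}{3b}-1$), the transfer $x_i\mapsto x_i-1$, $x_j\mapsto x_j+1$ strictly decreases $LE(G)$. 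Iterating such transfers drives every small part down to size $1$ and piles the rest onto $n_1$, yielding $(n-r+1,1,\dots,1)$ as the unique minimizer; I would need to verify the threshold condition $1<\tfrac{a}{3b}-1$ holds for the relevant $a,b$ (equivalently a mild inequality relating $n$ and $r$, which should hold whenever a join digraph in $\mathcal{G}_{n,r}$ exists, i.e. $n\ge r$), treating any tiny boundary cases separately. Finally, substituting $n_1=n-r+1$, $n_2=\cdots=n_r=1$ into $g$ and simplifying should produce exactly $(r-1)n^2+r^2n-r^3$, and the equality discussion collects the two sources of equality — each $V^i$ an in-tree, and the part sizes at the extremal point.

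One point to be careful about in the write-up: when $n_i=1$ the "in-tree" $V^i$ is just an isolated vertex, which is consistent with the $n_i-1=0$ internal contribution, so no special-casing is needed there; and the claim $n_1=n-r+1$ forces $n\ge r$, which is the nontriviality hypothesis implicit in $\mathcal{G}_{n,r}\ne\emptyset$. I would state the final value check as a routine computation rather than carrying it out in detail.
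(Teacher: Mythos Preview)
Your plan matches the paper's proof almost step for step: compute $LE(G)$ via Lemma~\ref{le:ch-2.1}, reduce each $V^i$ to an in-tree using the lower bounds from Lemma~\ref{le:ch-2.3}, simplify to a constant plus $\sum_i n_i^2(n_i-2n-3)$, and then optimise over the part sizes with Lemma~\ref{le:ch-2.4}.

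One correction is needed in that last step, though. With $f(x)=x^2(2n+3-x)$ (so $a=2n+3$, $b=1$) one has $LE(G)=C-\sum_i f(n_i)$, so the balancing transfer $x_i\mapsto x_i-1,\ x_j\mapsto x_j+1$ of Lemma~\ref{le:ch-2.4} decreases $\sum f$ and therefore \emph{increases} $LE$---the opposite of what you wrote. It is the reverse (unbalancing) move $x_i\mapsto x_i+1,\ x_j\mapsto x_j-1$ that lowers $LE$, and iterating \emph{that} is what drives the small parts down to $1$ and piles the remainder onto $n_1$; your sentence ``iterating such transfers drives every small part down to size $1$'' describes the effect of unbalancing, not of the transfer you actually wrote. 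Taken literally, your argument would point toward the balanced configuration as the minimiser. With the direction fixed (equivalently: argue, as the paper does, that $\sum f$ is \emph{maximised} at $(n-r+1,1,\dots,1)$), the proof is exactly the paper's.
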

\begin{proof}
Let $\{v^i_1,v^i_2,\ldots,v^i_{n_i}\}$ be the vertex set of $V^i$, where $i=1,2,\ldots,r$. Let $d^+_G(v^i_j)$ be the outdegree of $v^i_j$ in $G$ and $d^+_{V^i}(v^i_j)$ be the outdegree of $v^i_j$ in $V^i$, where $j=1,2,\ldots,n_i$. Obviously, we have $d^+_G(v^i_j)=n-n_i+d^+_{V^i}(v^i_j)$, where $j=1,2,\ldots,n_i$ and $i=1,2,\ldots,r$. Since $V^i$ is acyclic and connected, we know that $\sum^{n_i}_{j=1}d^+_{V^i}(v^i_j)=e(V^i)\geq n_i-1$, with equality if and only if $V^i$ is a directed tree. Using Lemma~\ref{le:ch-2.3}, we also have $\sum^{n_i}_{j=1}\left(d^+_{V^i}(v^i_j)\right)^2\geq n_i-1$, with equality if and only if $V^i$ is an in-tree.

Hence, starting with the expression from Lemma~\ref{le:ch-2.1}, we have
\begin{align*}
LE(G)&=\sum_{i=1}^n(d_i^+)^2+c_2\\
&=\sum^r_{i=1}\sum^{n_i}_{j=1}\left(d^+_G(v^i_j)\right)^2+2\sum_{i<j}n_in_j\\
&=\sum^r_{i=1}\sum^{n_i}_{j=1}\left(n-n_i+d^+_{V^i}(v^i_j)\right)^2+\left[\left(\sum^r_{i=1}n_i\right)^2-\sum^r_{i=1}n_i^2\right]\\
&=\sum^r_{i=1}\left[\sum^{n_i}_{j=1}(n-n_i)^2+2(n-n_i)\sum^{n_i}_{j=1}d^+_{V^i}(v^i_j)
+\sum^{n_i}_{j=1}\left(d^+_{V^i}(v^i_j)\right)^2\right]+\left(n^2-\sum^r_{i=1}n_i^2\right)\\
&\geq \left(n^3+\sum^r_{i=1}n_i^3-2n\sum^r_{i=1}n_i^2\right)+\sum^r_{i=1}\left[2(n-n_i)(n_i-1)+(n_i-1)\right]
+\left(n^2-\sum^r_{i=1}n_i^2\right)\\
&=\left(n^3+3n^2-(2r-3)n-r\right)+\sum^r_{i=1}n_i^2(n_i-2n-3).
\end{align*}

Next, we are going to use Lemma~\ref{le:ch-2.4} to determine the minimum value of the above sum $\sum^r_{i=1}n_i^2(n_i-2n-3)$. Since $n_i-2n-3<0$, this is equivalent to determining the maximum value of $\sum^r_{i=1}n_i^2(2n+3-n_i)$.

Let $f(x)=x^2(2n+3-x)$ and $F(x_1,x_2,\ldots,x_r)=\sum^r_{i=1}f(x_i)$, where $\sum^r_{i=1}x_i=n$ and $1\leq x_i\leq n-r+1$. Suppose that $x_i-x_j\geq2$ for some $x_i$ and $x_j$. Then, since $x_i+x_j<n-(r-2)$, we have $x_j<\frac{n-r}{2}$. Now, let $a=2n+3$ and $b=1$. Since $x_j<\frac{n-r}{2}<\frac{a}{3b}-1$, using Lemma~\ref{le:ch-2.4}, we get $f(x_i-1)+f(x_j+1)<f(x_i)+f(x_j)$. Then we have $F(x_1,\ldots,x_i-1,\ldots,x_j+1,\ldots,x_r)<F(x_1,\ldots,x_i,\ldots,x_j,\ldots,x_r)$. This implies that $\sum^r_{i=1}x_i^2(2n+3-x_i)$ is maximal when $x_1=n-r+1$ and $x_2=\cdots=x_r=1$.

Concluding, we obtain
\begin{align*}
LE(G)&\geq\left(n^3+3n^2-(2r-3)n-r\right)+\sum^r_{i=1}n_i^2(n_i-2n-3)\\
&\geq\left(n^3+3n^2-(2r-3)n-r\right)\\
&+(n-r+1)^2(n-r+1-2n-3)+(r-1)(1-2n-3)\\
&=(r-1)n^2+r^2n-r^3,
\end{align*}
with equality holding here and above if and only if each $V^i$ is an in-tree, $n_1=n-r+1$, and $n_2=\cdots=n_r=1$.
\end{proof}

The next result characterizes the digraphs which attain the maximal Laplacian energy $LE(G)$ among all join digraphs $\bigvee_{i=1}^{r} V^{i}$ in $\mathcal{G}_{n,r}$.
We will distinguish the cases that $r$ is a divisor of $n$, denoted by $r\mid n$, and that $r$ does not divide $n$, denoted by $r\nmid n$.

\noindent\begin{theorem}\label{th:ch-2.6} Let $G=\bigvee_{i=1}^{r} V^{i}$. Then the following inequalities hold:

\noindent (i) If $r\mid n$, we have
$$LE(G)\leq\left(1+\frac{1}{3r^2}-\frac{1}{r}\right)n^3-\frac{1}{2r}n^2+\frac{1}{6}n,$$
with equality holding if and only if each $V^i$ is a transitive tournament with $n_i=\frac{n}{r}$.

\noindent (ii) If $r\nmid n$, we have
$$LE(G)\leq n^3+\frac{1}{6}n+p-q,$$
where $p=\left\lceil\frac{n}{r}\right\rceil^2\left(n-r\left\lfloor\frac{n}{r}\right\rfloor\right)
\left(\frac{1}{3}\left\lceil\frac{n}{r}\right\rceil-n-\frac{1}{2}\right)$ and $q=\left\lfloor\frac{n}{r}\right\rfloor^2\left(n-r\left\lceil\frac{n}{r}\right\rceil\right)
\left(\frac{1}{3}\left\lfloor\frac{n}{r}\right\rfloor-n-\frac{1}{2}\right)$.
The inequality is an equality if and only if each $V^i$ is a transitive tournament, with $n_s=\left\lceil\frac{n}{r}\right\rceil$ for $s=1,2,\ldots,n-r\left\lfloor\frac{n}{r}\right\rfloor$ and $n_t=\left\lfloor\frac{n}{r}\right\rfloor$ for $t=n-r\left\lfloor\frac{n}{r}\right\rfloor+1,n-r\left\lfloor\frac{n}{r}\right\rfloor+2,\ldots,r$.
\end{theorem}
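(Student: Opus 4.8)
The plan is to mirror the structure of the proof of Theorem~\ref{th:ch-2.5}, but now pushing every estimate in the opposite direction. Write $\{v^i_1,\dots,v^i_{n_i}\}$ for the vertex set of $V^i$ and recall, as in that proof, that $d^+_G(v^i_j)=n-n_i+d^+_{V^i}(v^i_j)$ and $c_2=\bigl(\sum_{i=1}^r n_i\bigr)^2-\sum_{i=1}^r n_i^2=n^2-\sum_{i=1}^r n_i^2$. Starting from Lemma~\ref{le:ch-2.1} and expanding the squares, I get
\[
LE(G)=\sum_{i=1}^r\Bigl[n_i(n-n_i)^2+2(n-n_i)\,e(V^i)+\sum_{j=1}^{n_i}\bigl(d^+_{V^i}(v^i_j)\bigr)^2\Bigr]+n^2-\sum_{i=1}^r n_i^2 .
\]
The first step is to observe that, for fixed orders $n_1,\dots,n_r$, this is maximised when every $V^i$ is a transitive tournament. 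Indeed $n-n_i\ge 0$, so the middle term is maximised by taking $e(V^i)$ as large as possible; an acyclic digraph on $n_i$ vertices has at most $\binom{n_i}{2}$ arcs, with equality only for the transitive tournament, and by Lemma~\ref{le:ch-2.3} the same digraph simultaneously maximises $\sum_{j}(d^+_{V^i}(v^i_j))^2$, attaining $\tfrac{n_i(n_i-1)(2n_i-1)}{6}$. So there is no trade-off, and substituting the transitive-tournament values (outdegree sequence $n_i-1,n_i-2,\dots,0$ inside $V^i$, so $e(V^i)=\binom{n_i}{2}$) and simplifying should yield
\[
LE(G)\le n^3+\tfrac{n}{6}-\bigl(n+\tfrac12\bigr)\sum_{i=1}^r n_i^2+\tfrac13\sum_{i=1}^r n_i^3 ,
\]
with equality if and only if each $V^i$ is a transitive tournament. (As a check, $r=1$ recovers $LE=\tfrac{n(n-1)(2n-1)}{6}$ from Lemma~\ref{le:ch-2.3}.)

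The second step is to maximise $\Phi(n_1,\dots,n_r):=\sum_{i=1}^r n_i^2\bigl(\tfrac13 n_i-n-\tfrac12\bigr)=-\bigl(n+\tfrac12\bigr)\sum n_i^2+\tfrac13\sum n_i^3$ over all integers $n_1\ge\cdots\ge n_r\ge 1$ with $\sum n_i=n$. Note each summand is negative since $n_i\le n$. Writing $f(x)=x^2(a-bx)$ with $a=n+\tfrac12$ and $b=\tfrac13$, so that $f(n_i)$ is $-1$ times the $i$-th summand, maximising $\Phi$ is the same as minimising $\sum_i f(n_i)$, and here $\tfrac{a}{3b}-1=n-\tfrac12$. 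If two parts satisfy $n_i-n_j\ge 2$, then $n_j\le n-n_i\le n-1<n-\tfrac12$, so Lemma~\ref{le:ch-2.4} gives $f(n_i-1)+f(n_j+1)<f(n_i)+f(n_j)$; hence replacing $(n_i,n_j)$ by $(n_i-1,n_j+1)$ strictly increases $\Phi$. Consequently any partition that is not ``balanced'' (some two parts differing by at least $2$) is not a maximiser, and the unique maximiser is the balanced partition: $n_i=\tfrac nr$ for all $i$ when $r\mid n$, and $n_s=\lceil n/r\rceil$ for $s=1,\dots,n-r\lfloor n/r\rfloor$, $n_t=\lfloor n/r\rfloor$ otherwise, when $r\nmid n$ (this ordering is forced by $n_1\ge\cdots\ge n_r$).

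For the third step I substitute these partitions into the intermediate bound. When $r\mid n$, using $\sum n_i^2=n^2/r$ and $\sum n_i^3=n^3/r^2$ gives exactly $\bigl(1+\tfrac1{3r^2}-\tfrac1r\bigr)n^3-\tfrac1{2r}n^2+\tfrac16 n$, proving (i). When $r\nmid n$, let $s_0=n-r\lfloor n/r\rfloor$ be the number of ceiling-sized parts; then the number of floor-sized parts is $r-s_0=r\lceil n/r\rceil-n$, and
\[
\Phi=s_0\,\lceil\tfrac nr\rceil^2\bigl(\tfrac13\lceil\tfrac nr\rceil-n-\tfrac12\bigr)+\bigl(r\lceil\tfrac nr\rceil-n\bigr)\lfloor\tfrac nr\rfloor^2\bigl(\tfrac13\lfloor\tfrac nr\rfloor-n-\tfrac12\bigr)=p-q
\]
with $p,q$ as in the statement; adding $n^3+\tfrac n6$ gives (ii). In both cases the equality characterisation combines the two equality conditions found above: each $V^i$ must be a transitive tournament, and the orders $n_i$ must form the balanced partition. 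The only genuinely delicate point is the first step, namely verifying that a transitive tournament simultaneously maximises the linear arc-count term $2(n-n_i)e(V^i)$ and the quadratic outdegree term, so that the per-block maximum is attained without compromise; once that is settled, the rest is the majorization machinery of Lemma~\ref{le:ch-2.4} together with routine bookkeeping.
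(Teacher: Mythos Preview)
Your proposal is correct and follows essentially the same approach as the paper's own proof: expand $LE(G)$ via Lemma~\ref{le:ch-2.1}, bound each block's arc count and sum of squared outdegrees by the transitive-tournament values from Lemma~\ref{le:ch-2.3} to obtain the intermediate bound $n^3+\tfrac{n}{6}+\sum_i n_i^2(\tfrac13 n_i-n-\tfrac12)$, and then apply Lemma~\ref{le:ch-2.4} with $a=n+\tfrac12$, $b=\tfrac13$ to show the balanced partition is optimal. Your verification that $n_j\le n-1<n-\tfrac12=\tfrac{a}{3b}-1$ is in fact slightly more explicit than the paper's, which simply asserts the hypothesis of Lemma~\ref{le:ch-2.4} holds.
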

\begin{proof}
Similarly as in the proof of Theorem~\ref{th:ch-2.5}, using Lemma~\ref{le:ch-2.3}, we have $\sum^{n_i}_{j=1}d^+_{V^i}(v^i_j)=e(V^i)\leq \frac{n_i(n_i-1)}{2}$ and $\sum^{n_i}_{j=1}\left(d^+_{V^i}(v^i_j)\right)^2\leq \frac{n_i(n_i-1)(2n_i-1)}{6}$, with equality in the latter inequality if and only if $V^i$ is a transitive tournament.

Hence, using Lemma~\ref{le:ch-2.1}, we have
\begin{align*}
LE(G)&=\sum^r_{i=1}\sum^{n_i}_{j=1}\left(d^+_G(v^i_j)\right)^2+2\sum_{i<j}n_in_j\\
&=\sum^r_{i=1}\left[\sum^{n_i}_{j=1}(n-n_i)^2+2(n-n_i)\sum^{n_i}_{j=1}d^+_{V^i}(v^i_j)
+\sum^{n_i}_{j=1}\left(d^+_{V^i}(v^i_j)\right)^2\right]+\left(n^2-\sum^r_{i=1}n_i^2\right)\\
&\leq \left(n^3+\sum^r_{i=1}n_i^3-2n\sum^r_{i=1}n_i^2\right)
+\sum^r_{i=1}\left[2(n-n_i)\frac{n_i(n_i-1)}{2}+\frac{n_i(n_i-1)(2n_i-1)}{6}\right]\\
&+\left(n^2-\sum^r_{i=1}n_i^2\right)\\
&=\left(n^3+\frac{1}{6}n\right)+\sum^r_{i=1}n_i^2\left(\frac{1}{3}n_i-n-\frac{1}{2}\right).
\end{align*}
Next, we are going to use Lemma~\ref{le:ch-2.4} to determine the maximum value of the above sum $\sum^r_{i=1}n_i^2\left(\frac{1}{3}n_i-n-\frac{1}{2}\right)$. Since $\frac{1}{3}n_i-n-\frac{1}{2}<0$, this is equivalent to determining the minimum value of $\sum^r_{i=1}n_i^2\left(n+\frac{1}{2}-\frac{1}{3}n_i\right)$.

Let $f(x)=x^2\left(n+\frac{1}{2}-\frac{1}{3}x\right)$ and $F(x_1,x_2,\ldots,x_r)=\sum^r_{i=1}f(x_i)$, where $\sum^r_{i=1}x_i=n$ and $1\leq x_i\leq n-r+1$. Let $a=n+\frac{1}{2}$ and $b=\frac{1}{3}$. Since $x_j<\frac{a}{3b}-1$, using Lemma~\ref{le:ch-2.4}, we get $f(x_i-1)+f(x_j+1)<f(x_i)+f(x_j)$ for any $x_i,x_j$ with $x_i-x_j\geq2$. Then we have $F(x_1,\ldots,x_i-1,\ldots,x_j+1,\ldots,x_r)<F(x_1,\ldots,x_i,\ldots,x_j,\ldots,x_r)$. That is, when $|x_i-x_j|\leq1$, $\sum^r_{i=1}x_i^2\left(n+\frac{1}{2}-\frac{1}{3}x_i\right)$ is minimal.

\medskip
\noindent (i) If $r\mid n$, then $|n_i-n_j|\leq1$ implies $n_i=\frac{n}{r}$ for all $i=1,2,\ldots,r$. Therefore, we obtain
\begin{align*}
LE(G)&\leq\left(n^3+\frac{1}{6}n\right)+\sum^r_{i=1}n_i^2\left(\frac{1}{3}n_i-n-\frac{1}{2}\right)\\
&\leq\left(n^3+\frac{1}{6}n\right)+r\left(\frac{n}{r}\right)^2\left(\frac{n}{3r}-n-\frac{1}{2}\right)\\
&=\left(1+\frac{1}{3r^2}-\frac{1}{r}\right)n^3-\frac{1}{2r}n^2+\frac{1}{6}n,
\end{align*}
with equality if and only if each $V^i$ is a transitive tournament of order $n_i=\frac{n}{r}$.

\noindent (ii) If $r\nmid n$, then $|n_i-n_j|\leq1$ implies $n_s=\left\lceil\frac{n}{r}\right\rceil$ for $s=1,2,\ldots,n-r\left\lfloor\frac{n}{r}\right\rfloor$ and $n_t=\left\lfloor\frac{n}{r}\right\rfloor$ for $t=n-r\left\lfloor\frac{n}{r}\right\rfloor+1,n-r\left\lfloor\frac{n}{r}\right\rfloor+2,\ldots,r$. Therefore, we obtain
\begin{align*}
LE(G)&\leq\left(n^3+\frac{1}{6}n\right)+\sum^r_{i=1}n_i^2\left(\frac{1}{3}n_i-n-\frac{1}{2}\right)\\
&\leq\left(n^3+\frac{1}{6}n\right)+\left(n-r\left\lfloor\frac{n}{r}\right\rfloor\right)\left\lceil\frac{n}{r}\right\rceil^2
\left(\frac{1}{3}\left\lceil\frac{n}{r}\right\rceil-n-\frac{1}{2}\right)\\
&+\left(r-n+r\left\lfloor\frac{n}{r}\right\rfloor\right)\left\lfloor\frac{n}{r}\right\rfloor^2
\left(\frac{1}{3}\left\lfloor\frac{n}{r}\right\rfloor-n-\frac{1}{2}\right)\\
&=\left(n^3+\frac{1}{6}n\right)+\left(n-r\left\lfloor\frac{n}{r}\right\rfloor\right)\left\lceil\frac{n}{r}\right\rceil^2
\left(\frac{1}{3}\left\lceil\frac{n}{r}\right\rceil-n-\frac{1}{2}\right)\\
&-\left(n-r\left\lceil\frac{n}{r}\right\rceil\right)\left\lfloor\frac{n}{r}\right\rfloor^2
\left(\frac{1}{3}\left\lfloor\frac{n}{r}\right\rfloor-n-\frac{1}{2}\right)\\
&=n^3+\frac{1}{6}n+p-q,
\end{align*}
where $p=\left\lceil\frac{n}{r}\right\rceil^2\left(n-r\left\lfloor\frac{n}{r}\right\rfloor\right)
\left(\frac{1}{3}\left\lceil\frac{n}{r}\right\rceil-n-\frac{1}{2}\right)$ and $q=\left\lfloor\frac{n}{r}\right\rfloor^2\left(n-r\left\lceil\frac{n}{r}\right\rceil\right)
\left(\frac{1}{3}\left\lfloor\frac{n}{r}\right\rfloor-n-\frac{1}{2}\right)$.
Obviously, the inequality is an equality  if and only if each $V^i$ is a transitive tournament, with $n_s=\left\lceil\frac{n}{r}\right\rceil$ for $s=1,2,\ldots,n-r\left\lfloor\frac{n}{r}\right\rfloor$ and $n_t=\left\lfloor\frac{n}{r}\right\rfloor$ for $t=n-r\left\lfloor\frac{n}{r}\right\rfloor+1,n-r\left\lfloor\frac{n}{r}\right\rfloor+2,\ldots,r$.

This completes the proof of Theorem~\ref{th:ch-2.6}.
\end{proof}

Let $G[n_1,n_2,\ldots,n_r]$ denote the join $\bigvee_{i=1}^{r} V^{i}$ in which each $V^i$ is either an in-tree or a transitive tournament, respectively. Then by Lemma~\ref{le:ch-2.4} and from the proof of Theorem~\ref{th:ch-2.5} (or Theorem~\ref{th:ch-2.6}, respectively), we can find a size relationship with respect to the Laplacian energies of the digraphs $G[n_1,n_2,\ldots,n_r]\in\mathcal{G}_{n,r}$ for different choices of the $n_i$. We give the following example with $n=10$ and $r=4$ to illustrate this, as shown in Figure~\ref{fi:ch-1}. Every arrow points to a digraph with a higher Laplacian energy.

\begin{figure}[htbp]
\begin{centering}
\includegraphics[scale=1.2]{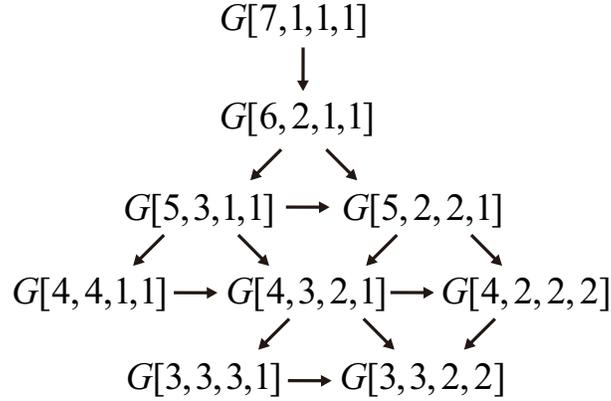}
\caption{The size relationship of the Laplacian energies of $G[n_1,n_2,n_3,n_4]\in\mathcal{G}_{10,4}$}\label{fi:ch-1}
\end{centering}
\end{figure}

\subsection{Extremal digraphs for the Laplacian energy among all digraphs}\label{sec2.2}
In our second main results, we will determine the digraphs which attain the minimal and maximal Laplacian energy $LE(G)$ among all digraphs in $\mathcal{G}_{n,r}$.

In order to characterize the digraphs which attain the minimal Laplacian energy $LE(G)$ among all digraphs in $\mathcal{G}_{n,r}$, we need some lemmas. We first list some useful results due to Mohar~\cite{Mo} involving $r$-critical digraphs. Suppose that $v\in\mathcal{V}(G)$ is a vertex such that $\chi(G-v)<\chi(G)$. Then we say that $v$ is a critical vertex. If every vertex of $G$ is critical and $\chi(G)=r$, then we say that $G$ is an $r$-critical digraph. Note that every digraph with dichromatic number at least $r$ contains an induced subdigraph that is $r$-critical.

\noindent\begin{lemma}\label{le:ch-2.7}(\cite{Mo}) If $v$ is a critical vertex in a digraph $G$ with  dichromatic number $r$, then $d^+_G(v)\geq r-1$ and $d^-_G(v)\geq r-1$.
\end{lemma}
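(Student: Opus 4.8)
The statement to prove is Lemma~\ref{le:ch-2.7}: if $v$ is a critical vertex in a digraph $G$ with dichromatic number $r$, then $d^+_G(v)\geq r-1$ and $d^-_G(v)\geq r-1$.

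\medskip
\noindent\textbf{Proof plan.} The plan is to argue by contradiction, following the standard ``greedy recoloring'' template used for Brooks-type results, but adapted to the acyclic-partition setting. Suppose $v$ is critical, so $\chi(G-v)=r-1$; fix an $(r-1)$-coloring of $G-v$, i.e., a partition $V_1,\dots,V_{r-1}$ of $\mathcal{V}(G)\setminus\{v\}$ into acyclic sets. If we could place $v$ into one of these classes $V_j$ so that $G[V_j\cup\{v\}]$ remains acyclic, we would obtain an $(r-1)$-coloring of $G$, contradicting $\chi(G)=r$. So for every $j$, adding $v$ to $V_j$ creates a directed cycle; any such cycle must pass through $v$, hence uses at least one out-arc of $v$ (an arc from $v$ into $V_j$) and at least one in-arc of $v$ (an arc from $V_j$ into $v$).

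\medskip
First I would handle the out-degree bound. Since $G[V_j\cup\{v\}]$ contains a directed cycle through $v$ for each $j=1,\dots,r-1$, that cycle leaves $v$ along an arc $(v,u_j)$ with $u_j\in V_j$. Because the sets $V_1,\dots,V_{r-1}$ are pairwise disjoint, the vertices $u_1,\dots,u_{r-1}$ are pairwise distinct, and each $(v,u_j)$ is a distinct out-arc of $v$ in $G$. Hence $d^+_G(v)\geq r-1$. The in-degree bound is completely symmetric: the same cycle enters $v$ along an arc $(w_j,v)$ with $w_j\in V_j$; disjointness of the $V_j$ gives $r-1$ distinct in-arcs of $v$, so $d^-_G(v)\geq r-1$.

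\medskip
The only subtlety I anticipate—and the one point I would make sure to state cleanly—is the justification that a newly created directed cycle in $G[V_j\cup\{v\}]$ must actually use $v$. This is immediate: $G[V_j]$ is acyclic by hypothesis, so any directed cycle in the larger induced subdigraph $G[V_j\cup\{v\}]$ that avoids $v$ would already be a directed cycle in $G[V_j]$, a contradiction; therefore the cycle contains $v$, and a directed cycle through $v$ necessarily contains exactly one arc out of $v$ and one arc into $v$ along the cycle, both with the other endpoint in $V_j$. Everything else is bookkeeping with the disjointness of the colour classes, so there is no real obstacle; the proof is short. (One could equally phrase it via $N^+_G(v)$ and $N^-_G(v)$ meeting every colour class, but the cycle-through-$v$ formulation keeps the out- and in-degree arguments exactly parallel.)
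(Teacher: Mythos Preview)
Your argument is correct and is the standard proof of this fact. Note, however, that the paper does not actually supply its own proof of Lemma~\ref{le:ch-2.7}; it is quoted from Mohar~\cite{Mo} as a known result and used as a black box. So there is no ``paper's proof'' to compare against here---your write-up is a valid self-contained proof of the cited lemma, and the greedy recoloring / cycle-through-$v$ reasoning you give is exactly the intended one.
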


\noindent\begin{lemma}\label{le:ch-2.8}(\cite{Mo}) Let $G$ be an $r$-critical digraph of order $n$ in which every vertex $v$ satisfies $d^+_G(v)=d^-_G(v)=r-1$. Then one of the following cases occurs:

\noindent (i) $r=2$ and $G$ is a directed cycle of length $n\geq2$.

\noindent (ii) $r=3$ and $G$ is a bidirected cycle of odd length $n\geq3$.

\noindent (iii) $G$ is a bidirected complete graph of order $r\geq4$.
\end{lemma}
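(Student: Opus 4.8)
The statement is a Brooks-type theorem for the dichromatic number, and I would prove it by following the template of the classical proof of Brooks' theorem, after reducing the symmetric case to the undirected statement. Note first that the converse directions (that each digraph listed in (i)--(iii) really is $r$-critical with all in- and out-degrees equal to $r-1$) are routine verifications, so only the forward implication needs work. A preliminary observation used throughout: an $r$-critical digraph has a connected underlying graph, since otherwise one component would already have dichromatic number $r$ and the vertices in the remaining components would fail to be critical.

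For $r=2$ the hypothesis is that $d^+_G(v)=d^-_G(v)=1$ for every $v$. Then the map sending each vertex to its unique out-neighbour is a bijection of $\mathcal V(G)$, so $G$ is a vertex-disjoint union of directed cycles, and connectedness of the underlying graph forces $G$ to be a single directed cycle $C_n$; here $n\ge 2$ because $\chi(G)=2$ excludes the single vertex. This gives case (i). Now assume $r\ge 3$. The plan is to show that $G$ is \emph{symmetric}, i.e.\ $(u,v)\in\mathcal A(G)$ if and only if $(v,u)\in\mathcal A(G)$, equivalently $G=\stackrel{\leftrightarrow}{H}$ for some undirected graph $H$. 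Once this is established the argument finishes quickly: a digon is a directed $2$-cycle, so the acyclic sets of $\stackrel{\leftrightarrow}{H}$ are exactly the independent sets of $H$, whence $\chi(\stackrel{\leftrightarrow}{H})$ equals the ordinary chromatic number $\chi(H)$; moreover $\stackrel{\leftrightarrow}{H}-v=\stackrel{\leftrightarrow}{H-v}$ and $d^{\pm}_{\stackrel{\leftrightarrow}{H}}(v)=d_H(v)$. Hence $H$ is a connected $(r-1)$-regular graph that is $r$-critical in the usual sense, and Brooks' theorem (equivalently, Gallai's characterisation of $(r-1)$-regular $r$-critical graphs) yields $H=K_r$ when $r\ge 4$ and $H=C_n$ with $n$ odd when $r=3$; for $r=3$ one needs only that even cycles are $2$-colourable. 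Translating back gives cases (iii) and (ii), and $\stackrel{\leftrightarrow}{K}_3=\stackrel{\leftrightarrow}{C}_3$ shows the two descriptions agree at $r=3$.

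It remains to prove symmetry, and here I would argue by contradiction, producing an $(r-1)$-colouring of $G$ by Kempe exchanges. Fix a vertex $u$ and an $(r-1)$-colouring of $G-u$, which exists by criticality. Since $u$ cannot be added to any colour class, $u$ has at least one out-neighbour and at least one in-neighbour in every class; as $u$ has exactly $r-1$ out-neighbours, exactly $r-1$ in-neighbours, and there are $r-1$ classes, $u$ has \emph{exactly one} out-neighbour and \emph{exactly one} in-neighbour in each class. If some arc $(u,v)$ is not in a digon, then in the class of $v$ the out-neighbour $v$ and the in-neighbour of $u$ are distinct. For two colours $i,j$ consider the subdigraph $D_{ij}$ spanned by classes $i$ and $j$ and the connected components of its underlying graph; swapping $i\leftrightarrow j$ on any one such component keeps both classes acyclic, because there are no arcs of $D_{ij}$ between a component and the rest, so each new class is a disjoint union of two acyclic digraphs. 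The goal is to choose such a swap that removes the out-neighbour \emph{or} the in-neighbour of $u$ from some class without reintroducing one of the other type, so that $u$ can then be absorbed into that class, contradicting $\chi(G)=r$. The point is that a "genuinely directed" arc at $u$ gives extra slack: the out- and in-neighbours of $u$ in a class are then two different vertices, which provides an additional target to eliminate and more freedom in the choice of component to recolour.

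The main obstacle is precisely this last step. Turning the informal "a directed arc gives extra freedom" into a complete analysis of the Kempe components — including the bookkeeping of which swaps are admissible, and a final examination of the rigid configurations that survive all swaps (where the split between $r=3$ and $r\ge4$ surfaces, exactly as in Brooks' theorem, together with the small/degenerate cases) — is in effect a full re-run of the proof of Brooks' theorem adapted to digraphs, and it is the only genuinely nontrivial part of the argument.
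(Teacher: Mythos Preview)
The paper does not prove this lemma: it is quoted from Mohar~\cite{Mo} and used as a black box in the proof of Theorem~\ref{th:ch-2.10}, so there is no in-paper argument to compare your proposal against.

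Judged on its own, your outline is sound where it is explicit. The $r=2$ case is correct; the observation that directed Kempe swaps on connected components of the underlying graph of $G[V_i\cup V_j]$ preserve acyclicity of both colour classes is valid for exactly the reason you give; and the reduction ``if $G$ is symmetric then apply undirected Brooks to $H$'' is legitimate, as is the counting that forces exactly one out-neighbour and one in-neighbour of $u$ in each colour class of any $(r-1)$-colouring of $G-u$.

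The gap is the one you yourself name. Establishing symmetry for $r\ge3$---that is, showing that a non-digon arc at $u$ lets one absorb $u$ into some class after suitable Kempe exchanges---is the entire substance of the lemma, and your proposal stops at the heuristic ``a genuinely directed arc gives extra slack.'' To turn this into a proof one must track how the Kempe components through the out-neighbour and the in-neighbour of $u$ in each class interact across different pairs of colours, show that when no swap helps these components are forced into a rigid structure, and then identify that structure (this is where the $r=3$/odd-cycle versus $r\ge4$/complete-graph dichotomy actually emerges). That analysis is essentially Mohar's argument, and without it the proposal remains an outline rather than a proof.
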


We also need the following lemma.

\noindent\begin{lemma}\label{le:ch-2.9} Let $G$ be a digraph in $\mathcal{G}_{n,r}$, and let $G'$ be an $r$-critical subdigraph of $G$. If $G$ attains the minimal Laplacian energy $LE(G)$ among all digraphs in $\mathcal{G}_{n,r}$, then $d^+_G(v)=1$ for any $v\in \mathcal{V}(G)\setminus\mathcal{V}(G')$ and $d^+_G(u)=d^+_{G'}(u)$ for any $u\in\mathcal{V}(G')$.
\end{lemma}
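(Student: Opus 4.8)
The plan is to argue by contradiction, assuming $G$ attains the minimal Laplacian energy in $\mathcal{G}_{n,r}$ but fails one of the two conclusions, and in each case to exhibit a digraph $H\in\mathcal{G}_{n,r}$ with $LE(H)<LE(G)$. The only tool needed is the formula $LE(G)=\sum_{i=1}^n(d_i^+)^2+c_2$ from Lemma~\ref{le:ch-2.1}, together with the elementary observation that deleting an arc from a vertex $v$ decreases $(d_v^+)^2$ (strictly, since $d_v^+\ge 1$ before deletion) and does not increase $c_2$; moreover, deleting an arc that does not lie on any directed $2$-cycle leaves $c_2$ unchanged. The key point is that since $G'$ is an induced $r$-critical subdigraph, $G'$ already has dichromatic number $r$, so $\chi(G)=r$ is preserved under any arc deletion that stays outside $\mathcal{A}(G')$; hence we have a lot of freedom to delete ``superfluous'' arcs while remaining in $\mathcal{G}_{n,r}$, except that we must keep the underlying graph connected.

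First I would handle the vertices outside $G'$. Let $v\in\mathcal{V}(G)\setminus\mathcal{V}(G')$. If $d_G^+(v)=0$, I would show this is impossible: since $LE$ is minimized, one can reroute or add a single arc out of $v$ to a neighbour without increasing $c_2$ and while keeping connectivity — more carefully, a digraph minimizing $LE$ over $\mathcal{G}_{n,r}$ with a vertex of outdegree $0$ outside $G'$ can be modified so that $v$ has outdegree exactly $1$; but one must be slightly careful here, so the cleaner route is to first establish the upper-bound direction: if $d_G^+(v)\ge 2$ for some $v\notin\mathcal{V}(G')$, pick any out-arc $(v,w)$ of $v$ and delete it. I would argue the resulting digraph $H$ is still connected (if $(v,w)$ is a bridge of the underlying graph, choose a different out-arc of $v$, or use the fact that $v$ has another out-arc and the underlying graph has enough edges — this connectivity bookkeeping is the fiddly part), still has $\chi(H)=r$ because $G'\subseteq H$, and satisfies $LE(H)<LE(G)$ because $(d_v^+)^2$ strictly decreases and $c_2$ does not increase. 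This contradicts minimality, so $d_G^+(v)\le 1$; combined with connectivity ruling out $d_G^+(v)=0$ (again a short connectivity argument, possibly rerouting an in-arc to an out-arc when $v$ is a ``leaf'' pointing inward), we get $d_G^+(v)=1$.

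Next I would treat the vertices $u\in\mathcal{V}(G')$, where the claim is $d_G^+(u)=d_{G'}^+(u)$, i.e.\ $u$ has no out-arcs in $G$ beyond those already in $G'$. Suppose $(u,z)\in\mathcal{A}(G)\setminus\mathcal{A}(G')$ for some $u\in\mathcal{V}(G')$. Delete this arc to form $H$. Since $G'$ is induced and remains a subdigraph of $H$, $\chi(H)\ge\chi(G')=r$; and $\chi(H)\le\chi(G)=r$, so $H\in\mathcal{G}_{n,r}$ provided $H$ is still connected. Again $LE(H)\le LE(G)-(2d_u^+-1)<LE(G)$ since $c_2$ cannot increase. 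The remaining issue is connectivity: if removing $(u,z)$ disconnects the underlying graph, then $z$ (or the side of the cut not containing $G'$) must be reachable only through this arc; since $z\in\mathcal{V}(G)$ and if $z\notin\mathcal{V}(G')$ we already know $d_G^+(z)=1$, one can locally re-attach that component by a single arc elsewhere without increasing outdegrees' squared sum by more than we saved, or more simply, invoke that among minimizers we may assume such pendant attachments are done minimally. I expect the connectivity housekeeping — making sure each arc deletion can be performed without disconnecting the underlying graph, or rerouting when it would — to be the main obstacle; everything else is a one-line application of Lemma~\ref{le:ch-2.1}.
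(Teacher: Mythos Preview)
Your overall strategy --- modify $G$ to produce an $H\in\mathcal{G}_{n,r}$ with $LE(H)<LE(G)$ whenever the conclusion fails --- is the paper's, but the two points you yourself flag as ``fiddly'' are where the argument actually lives, and you leave both open. First, ``connectivity ruling out $d_G^+(v)=0$'' is simply false: a connected digraph may well have sinks outside $V(G')$. The paper (its Claim~1) carries out the arc reversal you allude to: replace an in-arc $(w,v)$ by $(v,w)$ and note that $LE$ changes by $-(2d_G^+(w)-2)$. If $w\in V(G')$ then $d_G^+(w)\ge d^+_{G'}(w)+1\ge 2$ (Lemma~\ref{le:ch-2.7} plus $(w,v)\notin\mathcal{A}(G')$), giving a strict drop; if $w\notin V(G')$ with $d_G^+(w)=1$, the sink merely migrates to $w$ and one iterates until the strict case is reached. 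This is real content your plan skips.

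Second, your treatment of connectivity after deletion does not close. The paper's key structural step (its Claim~2) is to prove, from Claim~1 together with Lemma~\ref{le:ch-2.2} and minimality of $LE(G)$, that every component of $G-V(G')$ is already an in-tree whose root has a single out-arc into $V(G')$. Once this is in hand, any extra out-arc $(u,z)$ with $u\in V(G')$ must have $z\notin V(G')$ (since $G'$ is induced), so $(u,z)$ points \emph{into} such an in-tree and is therefore not a bridge of the underlying graph; deletion preserves connectivity for free. Your fallback of ``re-attaching the component by a single arc elsewhere'' would add an arc, possibly raising both $\sum(d_i^+)^2$ and $c_2$, and you give no accounting for why the net change stays negative. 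In short, you have the right contradiction mechanism but are missing the two structural claims that make it go through.
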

\begin{proof}
Suppose that $G$ attains the minimal Laplacian energy $LE(G)$ among all digraphs in $\mathcal{G}_{n,r}$. First, we prove $d^+_G(v)=1$ for any $v\in \mathcal{V}(G)\setminus\mathcal{V}(G')$. We start with the following claim.

\noindent\begin{claim}\label{cl:ch-1}
$d^+_G(v)\neq0$ for any $v\in \mathcal{V}(G)\setminus\mathcal{V}(G')$.
\end{claim}

\noindent Suppose there exists a vertex $v\in \mathcal{V}(G)\setminus\mathcal{V}(G')$ such that $d^+_G(v)=0$. Then $d^-_G(v)\geq1$, since $G$ is connected. Let $(w,v)\in\mathcal{A}(G)$ for $w\in\mathcal{V}(G)$. Let $G^1$ be obtained from $G$ by reversing the direction on the arc $(w,v)$, denoted as
$$G^1=G-(w,v)+(v,w).$$
Then
$$LE(G^1)=LE(G)-\left(d^+_G(w)\right)^2+\left(d^+_G(w)-1\right)^2+1.$$
We discuss the possible choices for $w$, and derive contradictions in all of the three cases.

\medskip\noindent
\textbf{Case 1:} $w\in\mathcal{V}(G')$. Since $G'$ is $r$-critical, $d^+_{G'}(w)\geq1$. So we have $d^+_G(w)>1$. Then $LE(G^1)<LE(G)$, a contradiction to $LE(G)$ being minimal.

\medskip\noindent
\textbf{Case 2:} $w\in \mathcal{V}(G)\setminus\mathcal{V}(G')$ and $d^+_G(w)>1$. Obviously $LE(G^1)<LE(G)$, a contradiction.

\medskip\noindent
\textbf{Case 3:} $w\in \mathcal{V}(G)\setminus\mathcal{V}(G')$ and $d^+_G(w)=1$. Then $LE(G^1)=LE(G)$ and we know $d^+_{G^1}(w)=0$. So, for $G^1$ there also exists a vertex $v\in \mathcal{V}(G^1)\setminus\mathcal{V}(G')$ such that $d^+_{G^1}(v)=0$. We use the following procedure:

\medskip
$H^0:=G$;

$i:=0$;

\textbf{while} $\exists\ v\in\mathcal{V}(H^i)\setminus\mathcal{V}(G')$ s.t. $d^+_{H^i}(v)=0$ \textbf{do begin}

\ \ \ \ \ \  select a vertex $w\in\mathcal{V}(H^i)$ with $(w,v)\in\mathcal{A}(H^i)$;

\ \ \ \ \ \ $H^{i+1}:=H^i-(w,v)+(v,w)$;

\ \ \ \ \ \ $i:=i+1$;

\textbf{end.}
\medskip

The resulting digraph $H$ we obtain at the termination of this procedure has no vertex $v\in\mathcal{V}(H)\setminus\mathcal{V}(G')$ such that $d^+_H(v)=0$. Hence, as above we conclude that $LE(H)<LE(G)$, a contradiction. This completes the proof of Claim~\ref{cl:ch-1}.

We also need the following claim.

\noindent\begin{claim}\label{cl:ch-2}
Every component of $G-\mathcal{V}(G')$ is an in-tree, the root of which is an inneighbor of exactly one vertex of $G'$.
\end{claim}

\noindent
Let $T$ be a component of $G-\mathcal{V}(G')$. Then Lemma~\ref{le:ch-2.2} implies that $T$ is a in-tree. Let $v_0\in\mathcal{V}(T)$ be the root of $T$. From Claim~\ref{cl:ch-1}, we have $d^+_G(v_0)\neq0$. Since $G$ is connected, by the minimality of $LE(G)$, $d^+_G(v_0)=1$. That is, the root of the in-tree $T$ is the inneighbor of exactly one vertex of $G'$. This completes the proof of Claim~\ref{cl:ch-2}.

From Claims~\ref{cl:ch-1} and~\ref{cl:ch-2}, we get that $d^+_G(v)=1$ for any $v\in\mathcal{V}(G)\setminus\mathcal{V}(G')$. Next we will prove that $d^+_G(u)=d^+_{G'}(u)$ for any $u\in\mathcal{V}(G')$.

Suppose there exists a vertex $u\in\mathcal{V}(G')$ such that $d^+_G(u)>d^+_{G'}(u)$. Then there exists an arc $(u,v)\in\mathcal{A}(G)$ for $v\in\mathcal{V}(G)\setminus\mathcal{V}(G')$. Let
$$G^2=G-(u,v).$$
Claim~\ref{cl:ch-2} implies that $G^2$ is connected. Clearly, $LE(G^2)<LE(G)$, a contradiction. Hence, we conclude that $d^+_G(u)=d^+_{G'}(u)$ for any $u\in\mathcal{V}(G')$.
\end{proof}

Now, we are ready to present and prove the main result of this section. It characterizes the digraphs which attain the minimal Laplacian energy $LE(G)$ among all digraphs in $\mathcal{G}_{n,r}$.

\noindent\begin{theorem}\label{th:ch-2.10} Let $G$ be a digraph in $\mathcal{G}_{n,r}$. Then the following inequalities hold:

\noindent (i) If $r=2$, we have
$$LE(G)\geq \begin{cases}
4,& \mbox{if} \ n=2,\\
n,& \mbox{if} \ n\geq3.
\end{cases}$$
If $n=2$, the inequality is an equality if and only if $G$ is a directed cycle $C_2$. If $n\geq3$, the inequality is an equality if and only if $G$ contains a directed cycle $C_{n'}$ $(n'\geq3)$ and every component (if any) of $G-\mathcal{V}(C_{n'})$ is an in-tree, the root of which is an inneighbor of exactly one vertex of $C_{n'}$.

\noindent (ii) If $r\geq3$, we have
$$LE(G)\geq n+r^3-r^2-r,$$
with equality holding if and only if $G$ contains a bidirected complete graph $\stackrel{\leftrightarrow}{K}_r$ and every component (if any) of $G-\mathcal{V}(\stackrel{\leftrightarrow}{K}_r)$ is an in-tree, the root of which is an inneighbor of exactly one vertex of $\stackrel{\leftrightarrow}{K}_r$.
\end{theorem}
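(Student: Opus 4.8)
The plan is to reduce Theorem~\ref{th:ch-2.10} to an extremal problem about $r$-critical digraphs and then solve that problem. Let $G\in\mathcal{G}_{n,r}$ attain the minimal Laplacian energy, and let $G'$ be an induced $r$-critical subdigraph of $G$ on $m$ vertices; such a $G'$ exists because $\chi(G)=r$. By Lemma~\ref{le:ch-2.9}, $d^+_G(v)=1$ for every $v\in\mathcal{V}(G)\setminus\mathcal{V}(G')$ and $d^+_G(u)=d^+_{G'}(u)$ for every $u\in\mathcal{V}(G')$, whence $\sum_{i=1}^n(d^+_i)^2=\sum_{u\in\mathcal{V}(G')}\bigl(d^+_{G'}(u)\bigr)^2+(n-m)$. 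Next I would check that $c_2(G)=c_2(G')$: by Claim~\ref{cl:ch-2} every component of $G-\mathcal{V}(G')$ is an in-tree, hence acyclic and carrying no directed closed walk of length $2$, and since $d^+_G(u)=d^+_{G'}(u)$ no arc of $G$ has its tail in $\mathcal{V}(G')$ and its head outside $\mathcal{V}(G')$, so no $2$-cycle of $G$ joins $\mathcal{V}(G')$ to the rest of $G$. Applying Lemma~\ref{le:ch-2.1} to $G$ and to $G'$ now yields the key identity
$$LE(G)=LE(G')+(n-m).$$
Moreover, any $r$-critical digraph $G'$ on $m\le n$ vertices can be completed to a digraph in $\mathcal{G}_{n,r}$ of Laplacian energy $LE(G')+(n-m)$: attach the remaining $n-m$ vertices as in-trees with roots directed into $G'$, color $G'$ with $r$ colors, and give each new vertex the color of the vertex of $G'$ that its root feeds into, so that every color class remains acyclic. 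Hence the theorem reduces to minimizing $LE(G')-m$ over all $r$-critical digraphs $G'$ with at most $n$ vertices.

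For that minimization I would first pin down the extremal $G'$. Every $r$-critical digraph has $m\ge r$, and the only one on exactly $r$ vertices is $\stackrel{\leftrightarrow}{K}_r$: if some pair of vertices spanned at most one arc it would induce an acyclic subdigraph, and coloring that pair with one color and the other $r-2$ vertices individually would give an $(r-1)$-coloring. Lemma~\ref{le:ch-2.1} gives $LE(\stackrel{\leftrightarrow}{K}_r)-r=r^3-r^2-r$ (with $\stackrel{\leftrightarrow}{K}_2=C_2$ giving $4-2=2$). For $m\ge r+1$ and $r\ge3$ I would show $LE(G')-m>r^3-r^2-r$ using three ingredients: Lemma~\ref{le:ch-2.7}, which gives $d^+_{G'}(v)\ge r-1$ and $d^-_{G'}(v)\ge r-1$ for every $v$; the elementary inequality $c_2(G')\ge 2e(G')-m(m-1)$, valid because the number of vertex pairs spanning at least one arc is at most $\binom{m}{2}$; and $c_2(G')\le\sum_v\min\bigl(d^+_{G'}(v),d^-_{G'}(v)\bigr)$. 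Writing $e(G')=m(r-1)+s$ with $s\ge0$, the first two ingredients together with Lemma~\ref{le:ch-2.1} give $LE(G')-m\ge m(r^2-m-1)+2rs$, which strictly exceeds $r^3-r^2-r$ already when $m=r+1$; for $m\ge r+2$ the cruder estimate $LE(G')-m\ge m(r-1)^2-m=mr(r-2)$ is at least $r^3-r^2-r$, with strict inequality except for $(r,m)=(3,5)$. For $r=2$ the same computations collapse to $LE(G')\ge m+c_2(G')\ge m$, with equality forcing all out- and in-degrees to equal $1$ (so $G'$ is a disjoint union of directed cycles, hence a single directed cycle since an $r$-critical digraph is connected) and $c_2(G')=0$ (so the cycle has length at least $3$).

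The step I expect to be the main obstacle is clearing the boundary of this case analysis, that is, the values of $m$ just above $r$ where the crude bounds are tight --- the critical instance being $(r,m)=(3,5)$, where $mr(r-2)=15=r^3-r^2-r$. At such an equality one is forced to have $d^+_{G'}(v)=d^-_{G'}(v)=r-1$ for all $v$ and $c_2(G')$ at its extreme value; I would then invoke Lemma~\ref{le:ch-2.8}, which says that an $r$-critical digraph with all in- and out-degrees equal to $r-1$ is a directed cycle ($r=2$), a bidirected odd cycle ($r=3$), or $\stackrel{\leftrightarrow}{K}_r$ ($r\ge4$), and check that none of these meets the required value of $c_2(G')$ except $\stackrel{\leftrightarrow}{K}_r$ with $m=r$. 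Some separate bookkeeping is needed for $r=2$, $r=3$ and $r\ge4$, but it is routine once the three estimates above are in place. The outcome of this stage is: for $r\ge3$, $LE(G')-m\ge r^3-r^2-r$ for every $r$-critical $G'$, with equality only for $\stackrel{\leftrightarrow}{K}_r$; for $r=2$, the minimum of $LE(G')-m$ over $r$-critical $G'$ with at most $n$ vertices equals $2$ when $n=2$ (only $C_2$ is available) and $0$ when $n\ge3$ (realized by any $C_{n'}$ with $3\le n'\le n$).

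Finally I would assemble the pieces. By the identity $LE(G)=(LE(G')-m)+n$ for the minimizer and the completion remark above, the minimum of $LE$ over $\mathcal{G}_{n,r}$ equals $n$ plus the minimum of $LE(G')-m$; this gives $LE(G)\ge4$ for $r=2,\,n=2$, $LE(G)\ge n$ for $r=2,\,n\ge3$, and $LE(G)\ge n+r^3-r^2-r$ for $r\ge3$. For the equality statements in the forward direction, if $G$ is a minimizer then every induced $r$-critical subdigraph $G'$ of $G$ satisfies $LE(G')-m=$ this minimum, so $G'$ is forced to be $C_2$, resp.\ a directed cycle $C_{n'}$ with $n'\ge3$, resp.\ $\stackrel{\leftrightarrow}{K}_r$, while Claims~\ref{cl:ch-1}--\ref{cl:ch-2} pin down the remaining vertices as in-trees whose roots are inneighbors of a single vertex of $G'$. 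For the converse direction, one computes with Lemma~\ref{le:ch-2.1} that each digraph described in the statement indeed lies in $\mathcal{G}_{n,r}$ and attains the stated bound, exactly as in the completion remark.
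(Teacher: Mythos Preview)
Your proposal is correct and follows essentially the same architecture as the paper: reduce via Lemma~\ref{le:ch-2.9} to an $r$-critical subdigraph $G'$, establish $LE(G)=LE(G')+(n-m)$, and then minimize $LE(G')-m$ over $r$-critical digraphs using Lemmas~\ref{le:ch-2.7} and~\ref{le:ch-2.8}. The main difference is in how the minimization is carried out: the paper simply asserts that ``straightforward calculations show that $LE(G')$ is minimal if $d^+_{G'}(u)=r-1$ for each vertex'' and then reads off the three cases of Lemma~\ref{le:ch-2.8}, whereas you supply explicit quantitative bounds (via $c_2\ge 2e-m(m-1)$ and $\sum(d^+)^2\ge m(r-1)^2+2(r-1)s$) and a short case analysis on $m$, invoking Lemma~\ref{le:ch-2.8} only to dispatch the borderline case $(r,m)=(3,5)$. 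Your route is longer but more self-contained; the paper's is terser but leaves the reader to reconstruct exactly the inequalities you wrote down.
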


\begin{proof}
Let $G$ be a digraph in $\mathcal{G}_{n,r}$. Then $G$ must contain an induced subdigraph $G'$ of order $n'$ that is $r$-critical. From Lemma~\ref{le:ch-2.9}, we obtain that if $G$ attains the minimal Laplacian energy $LE(G)$ among all digraphs in $\mathcal{G}_{n,r}$, then $d^+_G(v)=1$ for any $v\in\mathcal{V}(G)\setminus\mathcal{V}(G')$ and $d^+_G(u)=d^+_{G'}(u)$ for any $u\in\mathcal{V}(G')$. That is, $G$ contains an $r$-critical digraph $G'$ and every component of $G-\mathcal{V}(G')$ is an in-tree, the root of which is an inneighbor of exactly one vertex of $G'$. So, it suffices to characterize the $r$-critical digraphs which attain the minimal Laplacian energy.

By definition, every vertex of $G'$ is critical. Hence, using Lemma~\ref{le:ch-2.7}, we know that each vertex $u\in\mathcal{V}(G')$ satisfies $d^+_{G'}(u)\geq r-1$ and $d^-_{G'}(u)\geq r-1$. Recalling that $LE(G')=\sum_{u\in\mathcal{V}(G')} \left(d^+_{G'}(u)\right)^2+c_2(G')$, and noting that $c_2(u)\le d^+_{G'}(u)$, straightforward calculations show that $LE(G')$ is minimal if $d^+_{G'}(u)=r-1$ for each vertex $u\in\mathcal{V}(G')$. This means we can apply the characterizations of Lemma~\ref{le:ch-2.8}, implying the following conclusions.

If $r=2$, then $LE(G')$ is minimal when $G'$ is a directed cycle of length $n'\geq2$. So (i) follows.

If $r=3$, then $LE(G')$ is minimal when $G'$ is a bidirected cycle of odd length $n'\geq3$. In that case,
$$LE(G)\geq LE(G')+n-n'=4n'+n'(n'-1)+n-n'\geq n+15,$$
with equality holding if and only if $G$ contains a bidirected complete graph $\stackrel{\leftrightarrow}{K}_3$ (which is a bidirected cycle $\stackrel{\leftrightarrow}{C}_3$) and every component of $G-\mathcal{V}(\stackrel{\leftrightarrow}{K}_3)$ is an in-tree, the root of which is an inneighbor of exactly one vertex of $\stackrel{\leftrightarrow}{K}_3$.

If $r\geq4$, then $LE(G')$ is minimal when $G'$ is a bidirected complete graph of order $r\geq4$. In that case,
$$LE(G)\geq LE(G')+n-n'=r(r-1)^2+r(r-1)+n-r=n+r^3-r^2-r,$$
with equality holding if and only if $G$ contains a bidirected complete graph $\stackrel{\leftrightarrow}{K}_r$ and every component of $G-\mathcal{V}(\stackrel{\leftrightarrow}{K}_r)$ is an in-tree, the root of which is an inneighbor of exactly one vertex of $\stackrel{\leftrightarrow}{K}_r$.

This completes the proof of Theorem~\ref{th:ch-2.10}.
\end{proof}

As an illustration of the above theorem, Figure~\ref{fi:ch-2} shows all digraphs in $\mathcal{G}_{6,3}$ attaining the minimal Laplacian energy.

\begin{figure}[htbp]
\begin{centering}
\includegraphics[scale=1.2]{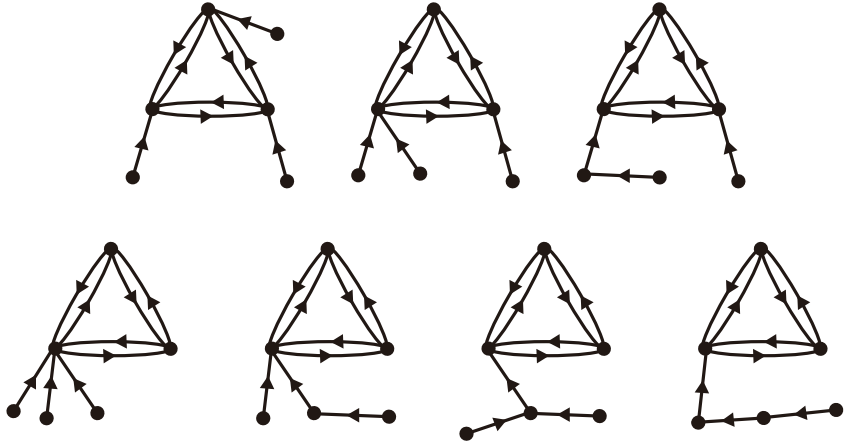}
\caption{The digraphs in $\mathcal{G}_{6,3}$ with the minimal Laplacian energy}\label{fi:ch-2}
\end{centering}
\end{figure}

The next result characterizes the digraphs which attain the maximal Laplacian energy $LE(G)$ among all digraphs in $\mathcal{G}_{n,r}$. It is an easy consequence of Theorem~\ref{th:ch-2.6}, which we state without proof.

\newpage
\noindent\begin{theorem}\label{th:ch-2.11} Let $G$ be a digraph in $\mathcal{G}_{n,r}$. Then the following inequalities hold:

\noindent (i) If $r\mid n$, we have
$$LE(G)\leq\left(1+\frac{1}{3r^2}-\frac{1}{r}\right)n^3-\frac{1}{2r}n^2+\frac{1}{6}n,$$
with equality holding if and only if $G=\bigvee_{i=1}^{r} V^{i}$ and each $V^i$ is a transitive tournament with $n_i=\frac{n}{r}$.

\noindent (ii) If $r\nmid n$, we have
$$LE(G)\leq n^3+\frac{1}{6}n+p-q,$$
where $p=\left\lceil\frac{n}{r}\right\rceil^2\left(n-r\left\lfloor\frac{n}{r}\right\rfloor\right)
\left(\frac{1}{3}\left\lceil\frac{n}{r}\right\rceil-n-\frac{1}{2}\right)$ and $q=\left\lfloor\frac{n}{r}\right\rfloor^2\left(n-r\left\lceil\frac{n}{r}\right\rceil\right)
\left(\frac{1}{3}\left\lfloor\frac{n}{r}\right\rfloor-n-\frac{1}{2}\right)$.
The inequality is an equality if and only if $G=\bigvee_{i=1}^{r} V^{i}$ and each $V^i$ is a transitive tournament, with $n_s=\left\lceil\frac{n}{r}\right\rceil$ for $s=1,2,\ldots,n-r\left\lfloor\frac{n}{r}\right\rfloor$ and $n_t=\left\lfloor\frac{n}{r}\right\rfloor$ for $t=n-r\left\lfloor\frac{n}{r}\right\rfloor+1,n-r\left\lfloor\frac{n}{r}\right\rfloor+2,\ldots,r$.
\end{theorem}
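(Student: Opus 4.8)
The plan is to derive Theorem~\ref{th:ch-2.11} from Theorem~\ref{th:ch-2.6} by a simple arc-completion argument: every digraph in $\mathcal{G}_{n,r}$ is a spanning subdigraph of a join of $r$ transitive tournaments that still belongs to $\mathcal{G}_{n,r}$, and passing to this supergraph never decreases the Laplacian energy. Thus the maximum over all of $\mathcal{G}_{n,r}$ is attained within the much smaller family of join digraphs, where Theorem~\ref{th:ch-2.6} already identifies the extremal configurations.

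First I would make the completion precise. Let $G\in\mathcal{G}_{n,r}$ (so $n\ge r$; otherwise $\mathcal{G}_{n,r}=\emptyset$ and there is nothing to prove) and fix a minimum coloring $\mathcal{V}(G)=V^1\cup\cdots\cup V^r$ into $r$ nonempty acyclic sets, with $|V^i|=n_i$. For each $i$ choose a topological ordering of the acyclic digraph $G[V^i]$ and let $T_i$ be the transitive tournament on $\mathcal{V}(V^i)$ consistent with that ordering, so $\mathcal{A}(G[V^i])\subseteq\mathcal{A}(T_i)$. Put $G^{\ast}=T_1\vee T_2\vee\cdots\vee T_r$. Since the arcs of $G$ inside each part lie in the corresponding $T_i$ and the arcs of $G$ between parts form a subset of the bidirected complete join, we have $\mathcal{A}(G)\subseteq\mathcal{A}(G^{\ast})$. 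I would then check that $G^{\ast}\in\mathcal{G}_{n,r}$: coloring by the $T_i$ gives $\chi(G^{\ast})\le r$, while if $\mathcal{V}(G^{\ast})$ admitted an acyclic partition into $r-1$ classes, some class would meet two distinct parts $T_a$ and $T_b$, hence contain a vertex $u\in\mathcal{V}(T_a)$ and a vertex $v\in\mathcal{V}(T_b)$ together with the bidirected pair $(u,v),(v,u)$, i.e.\ a directed $2$-cycle $u\to v\to u$ --- a contradiction. So $\chi(G^{\ast})=r$. (The join $G^{\ast}$ is connected, so it is an admissible member of $\mathcal{G}_{n,r}$; the case $r=1$ reduces directly to Lemma~\ref{le:ch-2.3}, consistently with the formula in (i).)

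Next I would record monotonicity of $LE$ under adding arcs. By Lemma~\ref{le:ch-2.1}, $LE(H)=\sum_i (d^+_i)^2+c_2(H)$; adding a single arc $(u,v)$ to $H$ raises $d^+(u)$ by one (strictly increasing $\sum_i(d^+_i)^2$) and changes $c_2$ by $0$ or $2$ according to whether $(v,u)$ was already present, so $LE$ strictly increases. Iterating from $G$ up to $G^{\ast}$ gives $LE(G)\le LE(G^{\ast})$, with equality if and only if $\mathcal{A}(G)=\mathcal{A}(G^{\ast})$, i.e.\ $G$ is itself a join of $r$ transitive tournaments. Now applying Theorem~\ref{th:ch-2.6} to $G^{\ast}$ (whose parts $T_i$ are connected acyclic digraphs) gives exactly the two bounds in (i) and (ii); and combining the two equality conditions --- $G=G^{\ast}$ from arc-monotonicity, and the balancedness of the $n_i$ together with ``each part a transitive tournament'' from Theorem~\ref{th:ch-2.6} --- yields precisely the extremal family stated: $G=\bigvee_{i=1}^r V^i$ with each $V^i$ a transitive tournament of order $n/r$ when $r\mid n$, and of order $\lceil n/r\rceil$ or $\lfloor n/r\rfloor$ otherwise.

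There is no genuine obstacle here, which is why the authors can state the theorem without a separate proof; the only points needing (minor) care are verifying that the completion preserves the dichromatic number \emph{exactly} --- the lower bound $\chi(G^{\ast})\ge r$, handled by the pigeonhole/$2$-cycle argument above --- and checking that the two equality characterizations combine to give exactly the digraphs listed in the theorem rather than a strictly larger or smaller family.
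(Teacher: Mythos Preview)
Your proposal is correct and matches the paper's intended route: the authors state Theorem~\ref{th:ch-2.11} ``is an easy consequence of Theorem~\ref{th:ch-2.6}, which we state without proof,'' and your arc-completion plus monotonicity argument is exactly the natural way to make that reduction precise. The minor care points you flagged --- that $\chi(G^{\ast})=r$ via the pigeonhole/$2$-cycle observation, and that strict monotonicity of $LE$ forces $G=G^{\ast}$ in the equality case --- are handled correctly.
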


\section{Bounds for the third Laplacian spectral moment}\label{sec3}
In this section, we will determine the sharp bounds for the third Laplacian spectral moment $LSM_3(G)$ of join digraphs in $\mathcal{G}_{n,r}$. First, we present a general formula for $LSM_3(G)$ of a digraph $G$. Recall that we assume $G$ has vertex set $\{v_1,v_2,\ldots,v_n\}$, with outdegrees $d^+_1, d^+_2, \ldots , d^+_n$, and that $\left(c^{(1)}_2, c^{(2)}_2,\ldots, c^{(n)}_2\right)$ denotes the directed closed walk sequence of length $2$. We let $c_3$ denote the total number of directed closed walks of length $3$ in $G$.

\noindent\begin{lemma}\label{le:ch-3.1} Let $G$ be a digraph of order $n$. Then
$$LSM_3(G)=\sum_{i=1}^n(d^+_i)^3+3\sum_{i=1}^nd_i^+c^{(i)}_2-c_3.$$
\end{lemma}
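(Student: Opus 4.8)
The plan is to compute $LSM_3(G)=\sum_{i=1}^n \lambda_i^3 = \operatorname{tr}(L(G)^3)$ directly by expanding the cube of $L(G)=D^+(G)-A(G)$ and interpreting each resulting trace combinatorially. Writing $D=D^+(G)$ and $A=A(G)$, we have
\begin{align*}
L^3 &= (D-A)^3\\
&= D^3 - D^2A - DAD - AD^2 + DA^2 + ADA + A^2D - A^3,
\end{align*}
so that, using the cyclic invariance of the trace,
$$LSM_3(G) = \operatorname{tr}(D^3) - 3\operatorname{tr}(D^2A) + 3\operatorname{tr}(DA^2) - \operatorname{tr}(A^3).$$
Here $\operatorname{tr}(D^2A)=0$ because $A$ has zero diagonal and $D^2A$ has the same zero diagonal (diagonal entry $i$ is $(d_i^+)^2 a_{ii}=0$), which kills the middle-left term.

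The next step is to identify the three surviving traces. First, $\operatorname{tr}(D^3)=\sum_{i=1}^n (d_i^+)^3$, immediately. Second, $\operatorname{tr}(DA^2)=\sum_{i=1}^n d_i^+ (A^2)_{ii}$, and $(A^2)_{ii}=\sum_j a_{ij}a_{ji}$ counts the directed closed walks of length $2$ through $v_i$, i.e. $(A^2)_{ii}=c_2^{(i)}$; hence $\operatorname{tr}(DA^2)=\sum_{i=1}^n d_i^+ c_2^{(i)}$. Third, $\operatorname{tr}(A^3)=SM_3(G)$ which, as recalled in the introduction, equals $c_3$, the total number of directed closed walks of length $3$ in $G$. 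Substituting these three evaluations into the displayed expansion gives exactly
$$LSM_3(G)=\sum_{i=1}^n (d_i^+)^3 + 3\sum_{i=1}^n d_i^+ c_2^{(i)} - c_3,$$
which is the claimed formula.

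The only genuinely delicate point is the vanishing of all the mixed terms that are not of the form $D^3$, $DA^2$ (up to cyclic permutation), or $A^3$: one must check that $\operatorname{tr}(D^2A)$, $\operatorname{tr}(DAD)$, $\operatorname{tr}(AD^2)$ each vanish, and that $\operatorname{tr}(DA^2)$, $\operatorname{tr}(ADA)$, $\operatorname{tr}(A^2D)$ are all equal to the same quantity. Both facts follow from the same two observations: $A$ is a zero-diagonal matrix, and any product $M_1 M_2 M_3$ with exactly two diagonal factors and one copy of $A$ has $(i,i)$-entry a scalar multiple of $a_{ii}=0$; while the three cyclic rotations of $DA^2$ have equal trace by cyclic invariance of the trace. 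I would state these as a one-line observation rather than belabour them. With that in hand the computation is routine and the identity follows; this matches the pattern already seen in Lemma~\ref{le:ch-2.1}, which is the $k=2$ analogue obtained the same way from $\operatorname{tr}(L^2)=\operatorname{tr}(D^2)-2\operatorname{tr}(DA)+\operatorname{tr}(A^2)=\sum_i (d_i^+)^2 + c_2$.
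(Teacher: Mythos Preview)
Your proof is correct and follows essentially the same approach as the paper: both compute $\operatorname{tr}(L(G)^3)$ directly and identify the pieces combinatorially. The only difference is organizational---the paper writes $\operatorname{tr}(L^3)=\sum_{j_1,j_2,j_3}\ell_{j_1j_2}\ell_{j_2j_3}\ell_{j_3j_1}$ and splits into cases according to which of $j_1,j_2,j_3$ coincide, whereas you first expand $(D-A)^3$ algebraically and then invoke the zero diagonal of $A$ and cyclic invariance of the trace; these two bookkeeping schemes correspond term-for-term (e.g.\ the paper's ``exactly two indices equal'' cases are precisely your three cyclic rotations of $DA^2$).
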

\begin{proof}
Since the Laplacian matrix is an $n \times n$ matrix $L(G)=(\ell_{ij})$, where
$$\ell_{ij}=
\begin{cases}
d_i^+,& \mbox{if} \ i=j,\\
-1,& \mbox{if} \ (v_i, v_j)\in\mathcal{A}(G),\\
0,& \mbox{otherwise},
\end{cases}$$
we have
$$LSM_3(G)=\sum_{i=1}^n(\lambda^+_i)^3=tr\left((L(G))^3\right)=\sum_{j_1=1}^n\sum_{j_2=1}^n\sum_{j_3=1}^n \ell_{j_1j_2}\ell_{j_2j_3}\ell_{j_3j_1}.$$
For the different (possible) choices of $j_1$, $j_2$, $j_3$, we presented the respective values in Table~\ref{ta:ch-1}.

\begin{table}[ht]
\renewcommand\arraystretch{1.5}\centering
\caption{The values of $\sum_{j_1=1}^n\sum_{j_2=1}^n\sum_{j_3=1}^n\ell_{j_1j_2}\ell_{j_2j_3}\ell_{j_3j_1}$ for different choices
of $j_1$, $j_2$, $j_3$}\label{ta:ch-1}
\begin{tabular}{|c|c|c|c|}
\hline
\multicolumn{3}{|c|}{$j_1$, $j_2$, $j_3$} & $\sum_{j_1=1}^n\sum_{j_2=1}^n\sum_{j_3=1}^n\ell_{j_1j_2}\ell_{j_2j_3}\ell_{j_3j_1}$  \\ \hline
\multirow{4}*{$j_1=j_2$}
& \multirow{2}*{$j_2=j_3$}
& $j_3=j_1$ & $\sum_{i=1}^n(d^+_i)^3$  \\ \cline{3-4}
&& $j_3\neq j_1$ & non-existent  \\ \cline{2-4}
& \multirow{2}*{$j_2\neq j_3$}
& $j_3=j_1$ & non-existent  \\ \cline{3-4}
&& $j_3\neq j_1$ & $\sum_{i=1}^nd_i^+c^{(i)}_2$  \\ \cline{1-4}
\multirow{4}*{$j_1\neq j_2$}
& \multirow{2}*{$j_2=j_3$}
& $j_3=j_1$ & non-existent  \\ \cline{3-4}
&& $j_3\neq j_1$ & $\sum_{i=1}^nd_i^+c^{(i)}_2$  \\ \cline{2-4}
& \multirow{2}*{$j_2\neq j_3$}
& $j_3=j_1$ & $\sum_{i=1}^nd_i^+c^{(i)}_2$  \\ \cline{3-4}
&& $j_3\neq j_1$ & $-c_3$  \\ \cline{2-4}
\hline
\end{tabular}
\end{table}

So, we get $LSM_3(G)=\sum_{i=1}^n(d^+_i)^3+3\sum_{i=1}^nd_i^+c^{(i)}_2-c_3$.
\end{proof}

We will use the above lemma to obtain an expression for $LSM_3(G)$ in case $G=\bigvee_{i=1}^{r} V^{i}$. We adopt the notation of the previous section.

\newpage
\noindent\begin{lemma}\label{le:ch-3.2} Let $G=\bigvee_{i=1}^{r} V^{i}$. Then
$$LSM_3(G)=\sum_{i=1}^r\sum_{j=1}^{n_i}\left(d^+_G(v^i_j)\right)^3+3\sum_{i=1}^rn_i(n-n_i)^2-\sum_{i=1}^rn_i\sum_{s\neq i}n_s(n-n_s-n_i).$$
\end{lemma}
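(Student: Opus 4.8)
The plan is to specialize the general formula from Lemma~\ref{le:ch-3.1} to the join structure $G=\bigvee_{i=1}^{r} V^{i}$, computing each of the three terms $\sum_i (d^+_i)^3$, $3\sum_i d^+_i c^{(i)}_2$, and $c_3$ separately in terms of the data of the pieces $V^i$. The first term requires no work: grouping the vertices by which $V^i$ they belong to gives $\sum_{i=1}^r\sum_{j=1}^{n_i}\bigl(d^+_G(v^i_j)\bigr)^3$ directly. The real content is in identifying the directed closed walks of length $2$ and $3$ and seeing which of them live inside a single $V^i$ (necessarily none of length $2$ or $3$, since each $V^i$ is acyclic) and which use arcs across the join.

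For the second term, I would observe that since each $V^i$ is acyclic, the only length-$2$ closed walks through a vertex $v^i_j$ are $v^i_j \to w \to v^i_j$ where $w$ lies in some $V^s$ with $s\neq i$; because the join contributes both arcs $(u,v)$ and $(v,u)$ for every cross pair, each of the $n-n_i$ vertices outside $V^i$ contributes exactly one such walk, so $c^{(i)}_2 = n - n_i$ for every $j$. Summing $d^+_G(v^i_j)\cdot(n-n_i)$ over $j$ and noting $\sum_{j=1}^{n_i} d^+_G(v^i_j) = \sum_{j=1}^{n_i}\bigl(n-n_i+d^+_{V^i}(v^i_j)\bigr) = n_i(n-n_i) + e(V^i)$ would give a clean closed form; but the claimed formula has $3\sum_i n_i (n-n_i)^2$ with no $e(V^i)$ term, so I would double-check whether the intended reading folds the $e(V^i)$-dependent part into the $c_3$ count instead. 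The cleanest route is: keep $3\sum_i d^+_i c^{(i)}_2 = 3\sum_i (n-n_i)\sum_j d^+_G(v^i_j)$ exact, and then show the $e(V^i)$ contributions cancel against matching terms coming from $c_3$.

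For the third term, $c_3$ counts directed closed triangles $x\to y\to z\to x$. Such a triangle cannot be confined to one acyclic $V^i$, so each triangle uses at least two distinct parts. I would split by how many of the three vertices lie in distinct $V^i$'s: triangles with vertices in three different parts $V^a,V^b,V^c$ (each ordered cyclic arrangement exists because all cross arcs are present in both directions), and triangles with exactly two vertices in some $V^i$ and one in a different $V^s$ — here the arc within $V^i$ between the two vertices must be present, so this case is precisely where $e(V^i)$ (in fact the number of adjacent ordered pairs, i.e. the arcs of $V^i$) enters, each such arc of $V^i$ combining with each of the $n-n_i$ outside vertices in two cyclic orientations. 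Assembling: the "two in one part" contribution is proportional to $\sum_i e(V^i)(n-n_i)$, which should exactly cancel the $e(V^i)$ piece left over from the second term, and the "three different parts" contribution, after a counting argument using $\sum_i n_i = n$, should collapse to the stated $\sum_i n_i\sum_{s\neq i} n_s(n-n_s-n_i)$.

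The main obstacle I anticipate is bookkeeping: correctly enumerating the triangle types without over- or under-counting (each unordered selection of parts versus each ordered closed walk, and the factor-of-two from the two cyclic orientations of a directed triangle on three fixed vertices), and then verifying algebraically that the leftover $\sum_i e(V^i)(n-n_i)$ terms from the $d^+c_2$ sum and from $c_3$ cancel exactly so that the final expression is genuinely independent of the internal structure of the $V^i$ beyond the cube-sum term. I would organize the triangle count by a small table analogous to Table~\ref{ta:ch-1}, handling the cases "all three vertices in distinct parts" and "two vertices in one part, one in another" (the case "all three in one part" being empty by acyclicity), and then finish with a short algebraic simplification to match the claimed right-hand side.
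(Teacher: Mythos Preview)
Your approach is essentially the paper's: start from Lemma~\ref{le:ch-3.1}, observe that $c_2(v^i_j)=n-n_i$ because each $V^i$ is acyclic, count $c_3$ by casing on which parts the three vertices of a closed $3$-walk occupy, and then verify that the $e(V^i)$-dependent pieces coming from $3\sum d^+c_2$ and from $c_3$ cancel. The paper organizes the triangle count slightly differently (it fixes the base vertex $v^i_j$ and cases on where the second and then the third vertex lie, rather than on the multiset of parts), but the substance is identical.

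One concrete slip to fix: your ``two cyclic orientations'' remark for the two-in-one-part triangles is wrong. If $a,b\in V^i$ with arc $(a,b)$ and $w\notin V^i$, then only the orientation $a\to b\to w\to a$ is a directed closed walk; the reverse $a\to w\to b\to a$ would need $(b,a)\in\mathcal{A}(V^i)$, impossible since $V^i$ is acyclic. So each arc of $V^i$ paired with each of the $n-n_i$ outside vertices yields \emph{one} directed triangle, hence contributes $3$ (not $6$) to $c_3$ once you account for the three starting points. With that correction the two-in-one-part contribution to $c_3$ is exactly $3\sum_i e(V^i)(n-n_i)$, which cancels against the $3\sum_i(n-n_i)e(V^i)$ piece of $3\sum_i(n-n_i)\sum_j d^+_G(v^i_j)$, and the three-parts contribution reduces to $\sum_i n_i\sum_{s\neq i}n_s(n-n_s-n_i)$ as claimed. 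Your instinct that this was the place to be careful was right.
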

\begin{proof}
From Lemma~\ref{le:ch-3.1}, we obtain
$$LSM_3(G)=\sum_{i=1}^r\sum_{j=1}^{n_i}\left(d^+_G(v^i_j)\right)^3+3\sum_{i=1}^r\sum_{j=1}^{n_i} d^+_G(v^i_j)c_2(v^i_j)-c_3.$$
We also recall that $d^+_G(v^i_j)=n-n_i+d^+_{V^i}(v^i_j)$ and $c_2(v^i_j)=n-n_i$, for $j=1,2,\ldots,n_i$ and $i=1,2,\ldots,r$. So, we next consider $c_3$.

Let $c_3(v^i_j)$ be the number of directed closed walks of length $3$ associated with $v^i_j$. Then $c_3=\sum_{i=1}^r\sum_{j=1}^{n_i}c_3(v^i_j)$ is the total number of directed closed walks of length $3$ in $G$. Actually, any directed closed walk of length $3$ associated with $v^i_j$ is a triangle that starts and ends in $v^i_j$. For any $v^i_j\in \mathcal{V}(V^i)$, we denote the associated triangles by $v^i_j\rightarrow u\rightarrow w\rightarrow v^i_j$ and discuss the possible choices for $u$ and $w$, and their contribution to the total number of triangles.

\medskip\noindent
\textbf{Case 1:}
$u\in \mathcal{V}(V^i)$. The total contribution is clearly $d^+_{V^i}(v^i_j)(n-n_i)$.

\medskip\noindent
\textbf{Case 2:}
$u\notin \mathcal{V}(V^i)$. Let $u\in \mathcal{V}(V^s)$ for $s\neq i$. Next, we consider $w$.

\medskip\noindent
\textbf{Case 2.1:}
$w\in \mathcal{V}(V^s)$. Then the total contribution is $\sum_{t=1}^{n_s}d^+_{V^s}(v^s_t)$.

\medskip\noindent
\textbf{Case 2.2:}
$w\notin \mathcal{V}(V^s)$. Then the total contribution is $n_s\left[(n-n_s-n_i)+d^-_{V^i}(v^i_j)\right]$.

\medskip\noindent
Hence, in Case 2, we get a total contribution of
$$\sum_{s\neq i}\left[\sum_{t=1}^{n_s}d^+_{V^s}(v^s_t)+n_s\left[(n-n_s-n_i)+d^-_{V^i}(v^i_j)\right]\right].$$

Summing up, we get
\begin{align*}
c_3&=\sum_{i=1}^r\sum_{j=1}^{n_i}c_3(v^i_j)\\
&=\sum_{i=1}^r\sum_{j=1}^{n_i}\left[d^+_{V^i}(v^i_j)(n-n_i)+\sum_{s\neq i}\left[\sum_{t=1}^{n_s}d^+_{V^s}(v^s_t)+n_s\left[(n-n_s-n_i)+d^-_{V^i}(v^i_j)\right]\right]\right].
\end{align*}

Thus,
\begin{align*}
LSM_3(G)&=\sum_{i=1}^r\sum_{j=1}^{n_i}\left(d^+_G(v^i_j)\right)^3+3\sum_{i=1}^r\sum_{j=1}^{n_i} d^+_G(v^i_j)c_2(v^i_j)-c_3\\
&=\sum_{i=1}^r\sum_{j=1}^{n_i}\left(d^+_G(v^i_j)\right)^3+3\sum_{i=1}^r\sum_{j=1}^{n_i} \left(n-n_i+d^+_{V^i}(v^i_j)\right)(n-n_i)\\
&-\sum_{i=1}^r\sum_{j=1}^{n_i}\left[d^+_{V^i}(v^i_j)(n-n_i)+\sum_{s\neq i}\left[\sum_{t=1}^{n_s}d^+_{V^s}(v^s_t)+n_s\left[(n-n_s-n_i)+d^-_{V^i}(v^i_j)\right]\right]\right]\\
&=\sum_{i=1}^r\sum_{j=1}^{n_i}\left(d^+_G(v^i_j)\right)^3+3\sum_{i=1}^r\left[n_i(n-n_i)^2+(n-n_i)e(V^i)\right]\\
&-\sum_{i=1}^r(n-n_i)e(V^i)-\sum_{i=1}^r\left[n_i\sum_{s\neq i}e(V^s)+n_i\sum_{s\neq i}n_s(n-n_s-n_i)+e(V^i)\sum_{s\neq i}n_s\right]\\
&=\sum_{i=1}^r\sum_{j=1}^{n_i}\left(d^+_G(v^i_j)\right)^3+3\sum_{i=1}^rn_i(n-n_i)^2-\sum_{i=1}^rn_i\sum_{s\neq i}n_s(n-n_s-n_i)\\
&+\sum_{i=1}^r e(V^i)\left[2(n-n_i)-\sum_{s\neq i}n_s\right]-\sum_{i=1}^rn_i\sum_{s\neq i}e(V^s).
\end{align*}

Since
\begin{align*}
&\sum_{i=1}^r e(V^i)\left[2(n-n_i)-\sum_{s\neq i}n_s\right]-\sum_{i=1}^rn_i\sum_{s\neq i}e(V^s)\\
&=\sum_{i=1}^r e(V^i)(n-n_i)-\sum_{i=1}^rn_i\sum_{s\neq i}e(V^s)\\
&=n\sum_{i=1}^r e(V^i)-\sum_{i=1}^r n_i\left[e(V^i)+\sum_{s\neq i}e(V^s)\right]\\
&=n\sum_{i=1}^r e(V^i)-\sum_{i=1}^r n_i\left(\sum^r_{i=1}e(V^i)\right)\\
&=0,
\end{align*}
we obtain
$$LSM_3(G)=\sum_{i=1}^r\sum_{j=1}^{n_i}\left(d^+_G(v^i_j)\right)^3+3\sum_{i=1}^rn_i(n-n_i)^2-\sum_{i=1}^rn_i\sum_{s\neq i}n_s(n-n_s-n_i).$$
This completes the proof of the lemma.
\end{proof}

Next, using the above expression we will determine sharp bounds for $LSM_3(G)$ of join digraphs in $\mathcal{G}_{n,r}$.

\noindent\begin{theorem}\label{th:ch-3.3} Let $G=\bigvee_{i=1}^{r} V^{i}$. Then

\medskip
\noindent (i)
\vspace*{-0.8cm}
\begin{align*}
LSM_3(G)&\geq\sum_{i=1}^r\left[-n_i^4+(3n+6)n_i^3-(3n^2+12n+6)n_i^2+(n^3+6n^2+9n+4)n_i\right]\\
&-r(3n^2+3n+1)-\sum_{i=1}^rn_i\sum_{s\neq i}n_s(n-n_s-n_i),
\end{align*}
with equality holding if and only if each $V^i$ is an in-tree.

\medskip
\noindent (ii)
\vspace*{-0.8cm}
\begin{align*}
LSM_3(G)&\leq\sum_{i=1}^r\left[-\frac{1}{4}n_i^4+\left(n+\frac{5}{2}\right)n_i^3
-\left(\frac{3n^2}{2}+\frac{9n}{2}+\frac{1}{4}\right)n_i^2+\left(n^3+\frac{3n^2}{2}+\frac{n}{2}\right)n_i\right]\\
&-\sum_{i=1}^rn_i\sum_{s\neq i}n_s(n-n_s-n_i),
\end{align*}
with equality holding if and only if each $V^i$ is a transitive tournament.
\end{theorem}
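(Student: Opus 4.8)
The plan is to start from the explicit formula for $LSM_3(G)$ in Lemma~\ref{le:ch-3.2} and reduce everything to bounding, separately for each connected acyclic digraph $V^i$, the three quantities $e(V^i)=\sum_j d^+_{V^i}(v^i_j)$, $\sum_j\bigl(d^+_{V^i}(v^i_j)\bigr)^2$ and $\sum_j\bigl(d^+_{V^i}(v^i_j)\bigr)^3$. First I would substitute $d^+_G(v^i_j)=n-n_i+d^+_{V^i}(v^i_j)$ into the cubic sum and expand, obtaining
\[
\sum_{j=1}^{n_i}\bigl(d^+_G(v^i_j)\bigr)^3
= n_i(n-n_i)^3+3(n-n_i)^2e(V^i)+3(n-n_i)\sum_{j=1}^{n_i}\bigl(d^+_{V^i}(v^i_j)\bigr)^2+\sum_{j=1}^{n_i}\bigl(d^+_{V^i}(v^i_j)\bigr)^3 .
\]
Since the coefficients $n_i(n-n_i)^3$, $3(n-n_i)^2$, $3(n-n_i)$ are nonnegative and since the remaining terms $3\sum_i n_i(n-n_i)^2$ and $-\sum_i n_i\sum_{s\neq i}n_s(n-n_s-n_i)$ in the formula of Lemma~\ref{le:ch-3.2} depend only on the orders $n_i$, it suffices to estimate $e(V^i)$, $\sum_j(d^+_{V^i})^2$, $\sum_j(d^+_{V^i})^3$ from below for (i) and from above for (ii), checking that the extremal $V^i$ coincides in all three estimates.

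For the lower bound (i): since $V^i$ is connected and acyclic, $e(V^i)\geq n_i-1$ with equality iff $V^i$ is a directed tree; $\sum_j(d^+_{V^i})^2\geq n_i-1$ with equality iff $V^i$ is an in-tree by Lemma~\ref{le:ch-2.3}; and for the cubes I would use the elementary bound $\sum_j(d^+_{V^i})^3\geq\sum_j d^+_{V^i}=e(V^i)\geq n_i-1$, valid because $x^3\geq x$ for every nonnegative integer $x$, with equality iff every outdegree lies in $\{0,1\}$ — so equality throughout again forces $V^i$ to be an in-tree. Plugging these in (the $e(V^i)$ contribution $3(n-n_i)^2(n_i-1)$ merges with $3\sum_i n_i(n-n_i)^2$ into $3(n-n_i)^2(2n_i-1)$) and expanding the resulting polynomial in $n$ and $n_i$ should reproduce exactly the claimed quartic-in-$n_i$ expression, with common equality case "each $V^i$ is an in-tree".

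For the upper bound (ii): Lemma~\ref{le:ch-2.3} gives $e(V^i)\leq\binom{n_i}{2}$ and $\sum_j(d^+_{V^i})^2\leq\frac{n_i(n_i-1)(2n_i-1)}{6}$, both tight iff $V^i$ is a transitive tournament. The only missing ingredient is $\sum_j(d^+_{V^i})^3\leq\bigl(\tfrac{n_i(n_i-1)}{2}\bigr)^2=\frac{n_i^2(n_i-1)^2}{4}$ with equality iff $V^i$ is a transitive tournament; I would prove this by exactly the Karamata argument used in the proof of Lemma~\ref{le:ch-2.3}, namely that the sorted outdegree sequence of any acyclic digraph on $n_i$ vertices is majorized by $(n_i-1,n_i-2,\ldots,1,0)$, and that $x\mapsto x^3$ is strictly convex on $[0,\infty)$. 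Substituting these upper bounds and simplifying the polynomial then yields the stated bound, tight iff each $V^i$ is a transitive tournament.

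The main obstacle is twofold and essentially a matter of bookkeeping rather than of new ideas: first, stating cleanly the majorization fact that is only implicit in the proof of Lemma~\ref{le:ch-2.3} (the sorted outdegree sequence of an acyclic digraph is dominated by that of the transitive tournament, with the dominating sequence realizable only by the transitive tournament); and second, the lengthy but routine algebraic simplification of $\sum_i\bigl[n_i(n-n_i)^3+3(n-n_i)^2(2n_i-1)+3(n-n_i)(n_i-1)+(n_i-1)\bigr]$, and of its analogue for (ii), into the compact forms in the statement. I would verify the arithmetic by matching the coefficients of $n_i^4$, $n_i^3$, $n_i^2$, $n_i$ and the $n_i$-free term one at a time.
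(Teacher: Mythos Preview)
Your proposal is correct and follows essentially the same approach as the paper: start from Lemma~\ref{le:ch-3.2}, expand the cubic via $d^+_G(v^i_j)=n-n_i+d^+_{V^i}(v^i_j)$, and bound each of $\sum_j d^+_{V^i}$, $\sum_j(d^+_{V^i})^2$, $\sum_j(d^+_{V^i})^3$ between $n_i-1$ (in-tree case) and the transitive-tournament values, then collect terms. The paper likewise invokes Lemma~\ref{le:ch-2.3} ``and similar calculations'' for the three pairs of bounds and performs the same algebraic simplification; your explicit justifications (the pointwise inequality $x^3\ge x$ for the lower cube bound and the Karamata/majorization argument for the upper cube bound) are exactly the content the paper leaves implicit.
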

\begin{proof}
We start with the expression from Lemma~\ref{le:ch-3.2}.
$$LSM_3(G)=\sum_{i=1}^r\sum_{j=1}^{n_i}\left(d^+_G(v^i_j)\right)^3+3\sum_{i=1}^rn_i(n-n_i)^2-\sum_{i=1}^rn_i\sum_{s\neq i}n_s(n-n_s-n_i).$$

Since $d^+_G(v^i_j)=n-n_i+d^+_{V^i}(v^i_j)$, for the first summation on the right-hand side, we obtain the following equality. $\sum_{i=1}^r\sum_{j=1}^{n_i}\left(d^+_G(v^i_j)\right)^3=$
\begin{align*}
\sum_{i=1}^r\left[n_i(n-n_i)^3+\sum_{j=1}^{n_i}\left(d^+_{V^i}(v^i_j)\right)^3
+3(n-n_i)\sum_{j=1}^{n_i}\left(d^+_{V^i}(v^i_j)\right)^2+3(n-n_i)^2\sum_{j=1}^{n_i}d^+_{V^i}(v^i_j)\right].
\end{align*}

Using Lemma~\ref{le:ch-2.3}, its proof, and similar calculations, we obtain the following lower and upper bounds which we will use to simplify some of the terms on the right-hand side of the above expression.
$$n_i-1\leq\sum_{j=1}^{n_i}d^+_{V^i}(v^i_j)\leq\frac{n_i(n_i-1)}{2},$$
$$n_i-1\leq\sum_{j=1}^{n_i}\left(d^+_{V^i}(v^i_j)\right)^2\leq\frac{n_i(n_i-1)(2n_i-1)}{6},$$
$$n_i-1\leq\sum_{j=1}^{n_i}\left(d^+_{V^i}(v^i_j)\right)^3\leq\frac{n_i^2(n_i-1)^2}{4}.$$
In all of the above three inequalities, the lower bounds are only attained if $V^i$ is an in-tree, and the upper bounds are only attained if $V^i$ is a transitive tournament.

Combining the above terms, for the lower bound on $LSM_3(G)$ we obtain $LSM_3(G)$
\begin{align*}
&=\sum_{i=1}^r\left[n_i(n-n_i)^3+\sum_{j=1}^{n_i}\left(d^+_{V^i}(v^i_j)\right)^3
+3(n-n_i)\sum_{j=1}^{n_i}\left(d^+_{V^i}(v^i_j)\right)^2+3(n-n_i)^2\sum_{j=1}^{n_i}d^+_{V^i}(v^i_j)\right]\\
&+3\sum_{i=1}^rn_i(n-n_i)^2-\sum_{i=1}^rn_i\sum_{s\neq i}n_s(n-n_s-n_i)\\
&\geq\sum_{i=1}^r\left[n_i(n-n_i)^3+(n_i-1)+3(n-n_i)(n_i-1)+3(n-n_i)^2(n_i-1)+3n_i(n-n_i)^2\right]\\
&-\sum_{i=1}^rn_i\sum_{s\neq i}n_s(n-n_s-n_i)\\
&=\sum_{i=1}^r\left[-n_i^4+(3n+6)n_i^3-(3n^2+12n+6)n_i^2+(n^3+6n^2+9n+4)n_i\right]-r(3n^2+3n+1)\\
&-\sum_{i=1}^rn_i\sum_{s\neq i}n_s(n-n_s-n_i),
\end{align*}
with equality holding if and only if each $V^i$ is an in-tree with $n_i$ vertices.

For the upper bound we obtain $LSM_3(G)$
\begin{align*}
&=\sum_{i=1}^r\left[n_i(n-n_i)^3+\sum_{j=1}^{n_i}\left(d^+_{V^i}(v^i_j)\right)^3
+3(n-n_i)\sum_{j=1}^{n_i}\left(d^+_{V^i}(v^i_j)\right)^2+3(n-n_i)^2\sum_{j=1}^{n_i}d^+_{V^i}(v^i_j)\right]\\
&+3\sum_{i=1}^rn_i(n-n_i)^2-\sum_{i=1}^rn_i\sum_{s\neq i}n_s(n-n_s-n_i)\\
&\leq\sum_{i=1}^r\left[n_i(n-n_i)^3+\frac{(n_i-1)^2n_i^2}{4}+3(n-n_i)\frac{n_i(n_i-1)(2n_i-1)}{6}
+3(n-n_i)^2\frac{n_i(n_i-1)}{2}\right]\\
&+\sum_{i=1}^r3n_i(n-n_i)^2-\sum_{i=1}^rn_i\sum_{s\neq i}n_s(n-n_s-n_i)\\
&=\sum_{i=1}^r\left[-\frac{1}{4}n_i^4+\left(n+\frac{5}{2}\right)n_i^3
-\left(\frac{3n^2}{2}+\frac{9n}{2}+\frac{1}{4}\right)n_i^2+\left(n^3+\frac{3n^2}{2}+\frac{n}{2}\right)n_i\right]\\
&-\sum_{i=1}^rn_i\sum_{s\neq i}n_s(n-n_s-n_i),
\end{align*}
with equality holding if and only if each $V^i$ is a transitive tournament with $n_i$ vertices.
\end{proof}

Unfortunately, we are not able to characterize the extremal digraphs for $LSM_3(G)$ precisely, as counterparts of Theorem~\ref{th:ch-2.5} and Theorem~\ref{th:ch-2.6}, except for the case when $r=2$. Based on the expressions in Theorem~\ref{th:ch-3.3}, we next determine the digraphs which attain the minimal and maximal value of
$LSM_3(G)$ among the join digraphs in $\mathcal{G}_{n,2}$.

\newpage
\noindent\begin{corollary}\label{co:ch-3.4} Let $G=V^1\vee V^2$. Then

\medskip\noindent
(i) $LSM_3(G)\geq n^3+8n-16,$ \\
with equality holding if and only if $V^1$ and $V^2$ are in-trees with $n_1=n-1$ and $n_2=1$.

\medskip\noindent
(ii) $LSM_3(G)\leq
\begin{cases}
\frac{1}{32}(15n^4-4n^3+12n^2),& \mbox{if $n$ is even},\\
\frac{1}{32}(15n^4-4n^3+6n^2-12n-5),& \mbox{if $n$ is odd},
\end{cases}$ \\
with equality holding if and only if $V^1$ and $V^2$ are transitive tournaments with $n_1=\left\lceil\frac{n}{2}\right\rceil$ and $n_2=\left\lfloor\frac{n}{2}\right\rfloor$.
\end{corollary}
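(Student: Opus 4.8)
The plan is to specialize Theorem~\ref{th:ch-3.3} to the case $r=2$ and then optimize the resulting one-parameter bounds over the partitions $n=n_1+n_2$ with $n_1\ge n_2\ge 1$. The first thing to notice is that for $r=2$ the term $\sum_{i=1}^{r}n_i\sum_{s\neq i}n_s(n-n_s-n_i)$ occurring in both bounds of Theorem~\ref{th:ch-3.3} vanishes: the inner sum runs over a single index, so the whole expression equals $2n_1n_2(n-n_1-n_2)=0$ since $n_1+n_2=n$. Writing $P_1(x)=-x^4+(3n+6)x^3-(3n^2+12n+6)x^2+(n^3+6n^2+9n+4)x$ and $P_2(x)=-\tfrac14x^4+(n+\tfrac52)x^3-(\tfrac32n^2+\tfrac92n+\tfrac14)x^2+(n^3+\tfrac32n^2+\tfrac12n)x$ for the degree-four polynomials inside the summations of Theorem~\ref{th:ch-3.3}(i) and~(ii), the lower bound of part~(i) collapses to $P_1(n_1)+P_1(n_2)-2(3n^2+3n+1)$ and the upper bound of part~(ii) collapses to $P_2(n_1)+P_2(n_2)$, each symmetric under $n_1\leftrightarrow n_2$ and hence a function of $n_1$ alone.

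The key step is a change of variable exploiting that symmetry. Setting $n_1=\tfrac n2+y$ and $n_2=\tfrac n2-y$, the sums $P_1(\tfrac n2+y)+P_1(\tfrac n2-y)$ and $P_2(\tfrac n2+y)+P_2(\tfrac n2-y)$ are even polynomials in $y$ of degree at most $4$, which I would evaluate by Taylor expansion about $n/2$. After simplification I expect
$$P_1\!\left(\tfrac n2+y\right)+P_1\!\left(\tfrac n2-y\right)-2(3n^2+3n+1)=\tfrac18n^4+\tfrac32n^3-2n-2-(6n+12)y^2-2y^4$$
and
$$P_2\!\left(\tfrac n2+y\right)+P_2\!\left(\tfrac n2-y\right)=\tfrac{15}{32}n^4-\tfrac18n^3+\tfrac38n^2-\left(\tfrac34n^2+\tfrac32n+\tfrac12\right)y^2-\tfrac12y^4.$$
The point is that for every $n\ge 2$ the coefficients of $y^2$ and of $y^4$ are negative in both expressions, so each bound is a strictly decreasing function of $|y|$; and $y$ ranges over $\{n_1-\tfrac n2 : 1\le n_1\le n-1\}$, i.e.\ over the integers if $n$ is even and over the half-odd-integers if $n$ is odd, with $0\le |y|\le \tfrac n2-1$.

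It then remains to read off the optimum. For the minimum in part~(i) we take $|y|$ maximal, $|y|=\tfrac n2-1$, attained exactly at $\{n_1,n_2\}=\{n-1,1\}$; substituting $y^2=\tfrac{(n-2)^2}{4}$ into the first display gives $n^3+8n-16$. For the maximum in part~(ii) we take $|y|$ minimal: $y=0$ when $n$ is even, i.e.\ $n_1=n_2=\tfrac n2$, and $|y|=\tfrac12$ when $n$ is odd, i.e.\ $n_1=\lceil n/2\rceil$, $n_2=\lfloor n/2\rfloor$; substituting $y=0$ (resp.\ $y^2=\tfrac14$) into the second display yields $\tfrac1{32}(15n^4-4n^3+12n^2)$ (resp.\ $\tfrac1{32}(15n^4-4n^3+6n^2-12n-5)$). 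The equality conditions are inherited verbatim from Theorem~\ref{th:ch-3.3}: each $V^i$ must be an in-tree for the lower bound and a transitive tournament for the upper bound, together with the orders $n_i$ singled out above. The only real work is the polynomial bookkeeping behind the two displayed identities, and the only genuine subtlety is tracking whether $y$ is an integer or a half-odd-integer, which is exactly what produces the parity split in part~(ii).
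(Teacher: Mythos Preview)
Your proof is correct and follows the same overall route as the paper: specialize Theorem~\ref{th:ch-3.3} to $r=2$, observe that the mixed term $\sum_{i}n_i\sum_{s\neq i}n_s(n-n_s-n_i)$ vanishes, and then optimize the resulting one-variable bound over partitions $n=n_1+n_2$. The only difference is in how the optimization step is carried out. The paper sets $x=n_2$, writes the bound as an explicit quartic $f(x)$ (respectively $g(x)$), and argues that $f$ is increasing on $[1,n/2]$ via the second-derivative check $f''<0$ together with $f'(n/2)=0$; you instead shift to $y=n_1-\tfrac{n}{2}$, which makes the symmetry $n_1\leftrightarrow n_2$ manifest and collapses the bound to an even quartic in $y$ with visibly negative $y^2$- and $y^4$-coefficients. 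Your displayed identities check out (I verified the Taylor coefficients and the evaluations at $|y|=\tfrac{n}{2}-1$, $y=0$, and $|y|=\tfrac12$), and the parity split in (ii) falls out naturally from whether $y$ is integer or half-odd-integer. This is the same argument in slightly cleaner packaging; neither approach introduces an idea the other lacks.
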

\begin{proof} \text{ }

\noindent
(i) Since $n_1\geq n_2$, we have $\lceil\frac{n}{2}\rceil\leq n_1\leq n-1$ and $1\leq n_2\leq\lfloor\frac{n}{2}\rfloor$. Let $n_2=x$ and $n_1=n-x$. Using Theorem~\ref{th:ch-3.3}, we have
\begin{align*}
LSM_3(G)&\geq -\left(n_1^4+n_2^4\right)+(3n+6)\left(n_1^3+n_2^3\right)-(3n^2+12n+6)\left(n_1^2+n_2^2\right)\\
&+(n^3+6n^2+9n+4)n-2(3n^2+3n+1)\\
&=-\left((n-x)^4+x^4\right)+(3n+6)\left((n-x)^3+x^3\right)-(3n^2+12n+6)\left((n-x)^2+x^2\right)\\
&+(n^3+6n^2+9n+4)n-2(3n^2+3n+1)\\
&=-2x^4+4nx^3-(3n^2+6n+12)x^2+(n^3+6n^2+12n)x-(3n^2+2n+2).
\end{align*}

Let $f(x)=-2x^4+4nx^3-(3n^2+6n+12)x^2+(n^3+6n^2+12n)x-(3n^2+2n+2)$. Next, we prove that $f(x)$ is an increasing function when $1\leq x\leq\frac{n}{2}$.

Since $f'(x)=-8x^3+12nx^2-(6n^2+12n+24)x+(n^3+6n^2+12n)$ and $f''(x)=-24x^2+24nx-(6n^2+12n+24)$, we get $f''(x)<0$ when $1\leq x\leq\frac{n}{2}$. So, $f'(x)$ is a decreasing function and $f'(x)\geq f'(x)_{min}=f'(\frac{n}{2})=0$. Hence, $f(x)$ is an increasing function when $1\leq x\leq\frac{n}{2}$, and consequently $f(x)\geq f(1)=n^3+8n-16$.

Concluding, we obtain $LSM_3(G)\geq n^3+8n-16,$ with equality holding if and only if $V^1$ and $V^2$ are in-trees with $n_1=n-1$ and $n_2=1$.

\medskip\noindent
(ii) Similarly as in the proof of (i), using Theorem~\ref{th:ch-3.3}, we have
\begin{align*}
LSM_3(G) &\leq -\frac{1}{4}\left(n_1^4+n_2^4\right)+\left(n+\frac{5}{2}\right)\left(n_1^3+n_2^3\right)
-\left(\frac{3n^2}{2}+\frac{9n}{2}+\frac{1}{4}\right)\left(n_1^2+n_2^2\right)\\
&+\left(n^3+\frac{3n^2}{2}+\frac{n}{2}\right)n\\
&=-\frac{1}{4}\left((n-x)^4+x^4\right)+\left(n+\frac{5}{2}\right)\left((n-x)^3+x^3\right)\\
&-\left(\frac{3n^2}{2}+\frac{9n}{2}+\frac{1}{4}\right)\left((n-x)^2+x^2\right)+\left(n^3+\frac{3n^2}{2}+\frac{n}{2}\right)n\\
&=-\frac{1}{2}x^4+nx^3-\frac{1}{2}(3n^2+3n+1)x^2+\frac{1}{2}(2n^3+3n^2+n)x+\frac{1}{4}(n^4-2n^3+n^2).
\end{align*}

Let $g(x)=-\frac{1}{2}x^4+nx^3-\frac{1}{2}(3n^2+3n+1)x^2+\frac{1}{2}(2n^3+3n^2+n)x+\frac{1}{4}(n^4-2n^3+n^2)$. Next, we prove that $g(x)$ is an increasing function when $1\leq x\leq\frac{n}{2}$.

Since $g'(x)=-2x^3+3nx^2-(3n^2+3n+1)x+\frac{1}{2}(2n^3+3n^2+n)$ and $g''(x)=-6x^2+6nx-(3n^2+3n+1)$, we get $g''(x)<0$ when $1\leq x\leq\frac{n}{2}$. So, $g'(x)$ is a decreasing function and $g'(x)\geq g'(x)_{min}=g'(\frac{n}{2})=0$. Hence, $g(x)$ is an increasing function when $1\leq x\leq\frac{n}{2}$, and consequently $g(x)\leq g(\frac{n}{2})$.

Substituting $x=\frac{n}{2}$ if $n$ is even, and $x=\lfloor\frac{n}{2}\rfloor=\frac{n-1}{2}$ if $n$ is odd, we obtain $LSM_3(G)\leq\frac{1}{32}(15n^4-4n^3+12n^2)$ for even $n$, and $LSM_3(G)\leq\frac{1}{32}(15n^4-4n^3+6n^2-12n-5)$ for odd $n$, with equality holding if and only if $V^1$ and $V^2$ are transitive tournaments with $n_1=\left\lceil\frac{n}{2}\right\rceil$ and $n_2=\left\lfloor\frac{n}{2}\right\rfloor$.
\end{proof}

\section{Concluding remarks}\label{sec4}
In Section ~\ref{sec2.1} of this paper, we fully characterized the extremal digraphs with a fixed dichromatic number that attain the minimal and maximal second Laplacian spectral moment among all join digraphs in $\mathcal{G}_{n,r}$.
From Theorem~\ref{th:ch-2.5} and Theorem~\ref{th:ch-2.6}, we know that the extremal digraphs are isomorphic to $\bigvee_{i=1}^{r} V^i$, where each $V^i$ is an in-tree with $n_1=n-r+1$ and $n_2=\cdots=n_r=1$, and each $V^i$ is a transitive tournament with $n_i=\left\lceil\frac{n}{r}\right\rceil$ or $n_i=\left\lfloor\frac{n}{r}\right\rfloor$, respectively.

In addition, in Section~\ref{sec2.2} of this paper, we characterized the extremal digraphs  that attain the minimal and maximal second Laplacian spectral moment among all digraphs in $\mathcal{G}_{n,r}$. In particular, the extremal digraphs in Theorem~\ref{th:ch-2.10} for the minimal second Laplacian spectral moment differ considerably from those in Theorem~\ref{th:ch-2.5}, and required a different proof approach.

We were unable to provide such a full characterization of the extremal digraphs for the third Laplacian spectral moment. However, restricting ourselves to join digraphs we demonstrated that in-trees and transitive tournaments play a key role there, too. From Theorem~\ref{th:ch-3.3}, we know that also for the third Laplacian spectral moment the extremal join digraphs are isomorphic to $\bigvee_{i=1}^{r} V^i$, where each $V^i$ is either an in-tree (for attaining the minimum), or a transitive tournament (for the maximum), but we could not determine the optimum values of $n_i$, except for the case when $r=2$. For $r=2$, we obtained a full characterization in Corollary~\ref{co:ch-3.4}, showing exactly the same extremal join digraphs as for the second Laplacian spectral moment. We complete this section with some examples of the extremal join digraphs in $\mathcal{G}_{n,3}$ for the third Laplacian spectral moment, as shown in Table~\ref{ta:ch-2}. Here we used $G[n_1,n_2,n_3]$ to denote the digraph $V^1\vee V^2\vee V^3$, and we used Lemma~\ref{le:ch-3.2} and Theorem~\ref{th:ch-3.3} to determine the extremal join digraphs.

\begin{table}[ht]
\renewcommand\arraystretch{1.5}\centering
\caption{The extremal join digraphs in $\mathcal{G}_{n,3}$ for the third Laplacian spectral moment}\label{ta:ch-2}
\begin{tabular}{|c|c|c|}
\hline
\tabincell{c}{$\mathcal{G}_{n,3}$} & \tabincell{c}{The minimal join digraphs for the third \\ Laplacian spectral moment \\
when each $V^i$ is an in-tree}
& \tabincell{c}{The maximal join digraphs for the third \\ Laplacian spectral moment when \\ each $V^i$ is a transitive tournament}
\\ \hline
$\mathcal{G}_{5,3}$ & $G[3,1,1]$ & $G[2,2,1]$ \\ \hline
$\mathcal{G}_{6,3}$ & $G[4,1,1]$ & $G[2,2,2]$ \\ \hline
$\mathcal{G}_{7,3}$ & $G[5,1,1]$ & $G[3,2,2]$ \\ \hline
$\mathcal{G}_{8,3}$ & $G[6,1,1]$ & $G[3,3,2]$ \\ \hline
$\mathcal{G}_{9,3}$ & $G[7,1,1]$ & $G[3,3,3]$ \\ \hline
$\mathcal{G}_{10,3}$ & $G[8,1,1]$ & $G[4,3,3]$ \\ \hline
$\mathcal{G}_{15,3}$ & $G[13,1,1]$ & $G[5,5,5]$ \\ \hline
$\mathcal{G}_{20,3}$ & $G[18,1,1]$ & $G[7,7,6]$ \\
\hline
\end{tabular}
\end{table}

From Table~\ref{ta:ch-2}, we conclude that the extremal join digraphs in $\mathcal{G}_{n,3}$ for the third Laplacian spectral moment are the same as for the second Laplacian spectral moment for $n=5,6,7,8,9,10,15,20$. This might suggest that the extremal join digraphs in $\mathcal{G}_{n,3}$ for the second and third Laplacian spectral moment are the same for all values of $n$, but we were unable to confirm this. We leave the full characterization of the extremal join digraphs in $\mathcal{G}_{n,r}$ for the third Laplacian spectral moment as an open problem.

\noindent\begin{problem}\label{pr:ch-4.1} Characterize the extremal digraphs for the third Laplacian spectral moment among all join digraphs with a fixed dichromatic number.
\end{problem}

We have partially solved the above problem by narrowing the extremal digraphs down to join digraphs $\bigvee_{i=1}^r V^i$, where either each $V^i$ is an in-tree or each $V^i$ is a transitive tournament. In this paper, we did not consider the $k$-th Laplacian spectral moment for higher values of $k\geq4$. We leave this as another challenging open problem.

\noindent\begin{problem}\label{pr:ch-4.2} Characterize the extremal digraphs for the $k$-th Laplacian spectral moment among all join digraphs with a fixed dichromatic number, for a fixed integer $k\ge 4$.
\end{problem}

We also leave the characterization for general digraphs with a fixed dichromatic number as an open problem. In the light of Theorem~\ref{th:ch-2.10}, we think this could be particularly challenging for the minimal spectral moments.

\noindent\begin{problem}\label{pr:ch-4.3} Characterize the extremal digraphs for the $k$-th Laplacian spectral moment among all digraphs with a fixed dichromatic number, for a fixed integer $k\ge 3$.
\end{problem}

\end{document}